\newtheorem{theorem}{Theorem}
\newtheorem{lemma}{Lemma}
\newtheorem{proposition}{Proposition}
\newtheorem{remark}{Remark}
\newenvironment{proof}{\noindent {\it Proof}.}{\hfill$\Box$}
\newcommand\diam{\operatorname{diam}}
\title{Large entropy implies existence of a maximal entropy measure for interval maps
\footnotetext{{\it 2000 Mathematics Subject Classification.} 37E05, 37C40, 
37B40.}
\footnotetext{{\it Key words and phrases.} maximal entropy measure, 
interval map, Markov shift.}
\footnotetext{Discrete Contin. Dyn. Syst. Ser. A, {\bf 14} (4), 673-688, 2006.}
}
\author{J{\'e}r{\^o}me Buzzi and Sylvie Ruette}
\date{}
\begin{document}
\maketitle

\begin{abstract}
We give a new type of sufficient condition for the existence of
measures with maximal entropy for an interval map $f$, using some non-uniform
hyperbolicity to compensate for a lack of smoothness of $f$. 
More precisely, if the topological entropy of a $C^1$ interval map is
greater than the sum of the local entropy and the entropy of the critical
points, then there exists at least one measure with maximal entropy. As a
corollary, we obtain that any $C^r$ interval map $f$ such that
$h_{{\rm top}}(f)>2\log\|f'\|_{\infty}/r$ possesses measures with maximal entropy.
\end{abstract}

\section{Introduction}

Let $f\colon X\to X$ be a continuous map, where $X$ is a compact metric
space with distance denoted by $d$. The
$(\epsilon,n)$-ball around
$x$  is the set
 $$
     B(x,n,\epsilon) = \{ y\in X\mid  \forall\, 0\leq k<n, \; d(f^ky,f^kx)\leq\epsilon
     \},
 $$
and, if $S\subset X$,  $r(\epsilon,n,S)$ is the minimum number of 
$(\epsilon,n)$-balls the union of which covers $S$.
Recall that the entropy is a measure of dynamical complexity (see \cite{DGS}
for background).
Namely, the topological entropy of $f\colon X\to X$ counts
the number of orbits in the following way, according to Bowen's definition
\cite{Bowen0}:
 $h_{\rm top}(f) = h_{\rm top}(X,f) = \lim_{\epsilon\to 0} h_{\rm top}(X,f,\epsilon)$
 with
 $$
 h_{\rm top}(X,f,\epsilon)=
\limsup_{n\to+\infty}\frac1n\log r(\epsilon,n,X).
 $$
The entropy $h_{\rm top}(S,f)$ of a (not necessarily invariant) subset 
$S\subset X$ is defined in the same way.

The entropy of an invariant and ergodic probability measure $\mu$ of
$f$ is similarly defined, according to Katok's formula \cite{Katok}: 
$h(\mu,f)=\lim_{\epsilon \to 0} h(\mu,f,\epsilon)$, with
 $$
   h(\mu,f,\epsilon) =\limsup_{n\to+\infty} \frac1n\log 
\inf_{\mu(Y)\geq\lambda} r(\epsilon,n,Y)
 $$
where $\lambda$ is any  number  in $(0,1)$.
 
In this continuous and compact setting, it is well-known that the
{\em variational principle} holds (see, e.g., \cite{Walters}): the
topological entropy
$h_{{\rm top}}(f)$ of
$f$ is equal to the supremum of the metric entropies $h(\mu,f)$ taken
over all $f$-invariant probability measures. A measure $\mu$ such that
$h(\mu,f)=h_{{\rm top}}(f)$ is called a {\em maximal measure}. Such
measures, when they exist, are particularly interesting because they
reflect the whole topological complexity of the system, and they enable
to see where this complexity concentrates.

However maximal measures do not always exist. Continuity and even 
mild differentiability are insufficient to ensure their existence in
contrast to the generality of the variational principle. 
In fact, given $r<+\infty$ arbitrary large, there exist $C^r$ 
diffeomorphisms of compact $4$-dimensional
manifolds (constructed by M. Misiurewicz \cite{Misiu1}) as well as $C^r$
interval maps \cite{BuzziSIM,Ruette} (necessarily with an infinite
critical set, see below) which have no maximal measure. 

There are mainly  two types of situation
when existence is known: i) when the dynamics has some
expansiveness (for example if the system is expansive, see e.g.,
\cite{DGS},  or in the case of uniform hyperbolicity, see e.g.,
\cite{Bowen}); ii) when the map is $C^{\infty}$ \cite{Newhouse}.
In both cases, and in fact in all existence results we know of,
one proves that the metric entropy $\mu\mapsto h(\mu,f)$ is upper 
semicontinuous and therefore reaches its supremum by compactness 
(see e.g., \cite{DGS}).  
The only exceptions are the abstract characterizations of existence
due to M. Denker \cite{Denker} in a topological setting and to S. Newhouse
(Theorem 8 of \cite{Newhouse}) for diffeomorphisms. These
results are obtained by establishing upper semicontinuity of the entropy
on an appropriate compact subset of measures.

\medbreak

Our goal is to show that non-uniform
hyperbolicity and finite order differentiability can be combined
to get a criterion of existence of maximal measures.
In this paper, we  focus on continuous interval 
maps $f\colon [0,1]\to[0,1]$ with non-zero topological entropy. 

\medbreak

Let $C(f)$ denote the critical set of $f$, that is, the set of points
which have no neighbourhood on which $f$ is monotonic (if $f$
is $C^1$ then $C(f)$ is contained in the zeroes of $f'$). 
If $C(f)$ is finite, $f$ 
is a (continuous) {\it piecewise
monotonic map} and for such maps existence is known at least since
Hofbauer's paper \cite{Hofbauer}.  The uniqueness of the maximal measure
was first shown for $\beta$-transformations by Takahashi
\cite{Takahashi} (see also \cite{Hofbauer-beta}), then Hofbauer extended
the method -- association of a Markov shift, the 
{\em Markov diagram}, to the initial system -- to general piecewise 
monotonic maps \cite{Hofbauer}. He
showed that these maps have a finite non zero number of ergodic maximal
measures as soon as their topological entropy is positive, and that
the maximal measure is even unique if in
addition $f$ is topologically transitive. 

\medbreak

For continuous interval maps with an infinite critical set the
situation is more complex. Neither existence nor finite multiplicity
of ergodic maximal measures are guaranteed but we are
going to give a sufficient condition in the form of a lower bound on
the topological entropy.

Two quantities play an important role. The first one is the
{\it topological entropy of the critical set}: its
``smallness'' can replace the finiteness of the critical set (which was
the required assumption in Hofbauer's work).
Namely, it was shown in \cite{BuzziThese,BuzziSIM} that 
a Markov diagram can be associated to any interval map $f$ and,
if $h_{\rm top}(f)>h_{\rm top}(C(f),f)$,
then there is a bijection between the maximal measures
of $f$ and those of its Markov diagram (see section \ref{secDiagram}).
This is a decisive step because Gurevich gave an equivalent condition
for existence and uniqueness of maximal measures for transitive
Markov shifts \cite{Gur2}. In many cases the condition 
$h_{\rm top}(f)>h_{\rm top}(C(f),f)$ can be checked using the fact that the
topological entropy of the critical set is bounded 
by $\log\|f'\|_{\infty}/r$ for a $C^r$ interval map $f$ 
(if $r$ is not an integer, this means that $f$ is $C^{[r]}$ and the 
$[r]$-th derivative is $(r-[r])$-H{\"o}lder); in particular 
$h_{{\rm top}}(C(f),f)$ is equal to zero for a $C^{\infty}$ map 
\cite{BuzziThese,BuzziSIM}.
Actually, for an interval map that is $C^{1+\alpha}$, $\alpha>0$,
the condition $h_{{\rm top}}(f)>0$ is enough to have a bijection
between the maximal measures of $f$ and those of a Markov shift provided
one uses a variant of the Markov diagram (see \cite{BuzziEE}).

The second key notion is that of {\em local
entropy}.  The notion of $\epsilon$-local entropy was introduced 
by Bowen \cite{Bow} to bound the difference between $h(\mu,f)$ and
the entropy of a partition with diameter less than $\epsilon$,
then Misiurewicz showed that local entropy
(that he called {\em conditional topological entropy}) bounds the defect in 
upper semicontinuity of the metric entropy $\mu\mapsto h(\mu,f)$ 
\cite{Misiu2}. Hence if the local entropy is zero then 
there exists some maximal measure.

We shall work with the  following (equivalent) definition:

\medbreak\noindent
{\bf Definition }{\it 
The local entropy of a continuous self-map $f \colon X\to X$ of a compact
metric space is 
\footnote{Let us notice that in this definition the supremum over 
all points $x$ can be moved outside of the limits on $\epsilon,\delta$ and
$n$ (see \cite{Bow} or \cite{DGS}).}
$h_{\rm loc}(f) = \lim_{\epsilon\to0} h_{\rm loc}(f,\epsilon)$ where
 $$
     h_{\rm loc}(f,\epsilon) = \lim_{\delta\to0} \limsup_{n\to+\infty} \frac1n\log
        \sup_{x\in X} r(\delta,n,B(x,n,\epsilon)).
 $$
}

\medbreak

The local entropy is bounded by $\frac{d}{r}\log\sup\|f'\|_\infty$  
for a $C^r$ map on a compact manifold of dimension $d$.
This was proved for a (slightly weaker) measure-theoretic 
local entropy by Newhouse \cite{Newhouse} and then for exactly the
above notion by one of us \cite{BuzziSIM}. In particular it is zero for 
$C^\infty$ maps; notice that it is zero for piecewise monotonic maps too. 

\medbreak

\noindent
{\it Remark.} Using Blokh's spectral decomposition \cite{Blokh1,Blokh2} for
a continuous interval map $f$, it was shown in \cite{BuzziThese,BuzziSIM}
that there are only finitely many connected components in the Markov
diagram with entropy close to $h_{\rm top}(f)$ if $h_{\rm top}(f)>h_{\rm loc}(f)$,
each of which supporting at most one ergodic maximal measure
according to Gurevich \cite{Gur2}.
Using the above
bijection between the maximal measures of $f$ and those of the
Markov diagram, this implies that there are only finitely many ergodic
maximal measures if $h_{\rm top}(f)>\max(h_{\rm top}(C(f),f),h_{\rm loc}(f))$
(for $C^{1+\alpha}$ maps, $h_{\rm top}(f)>h_{\rm loc}(f)$ is in fact sufficient 
using \cite{BuzziEE}).

\medbreak
Recalling that $\log\|f'\|_\infty/r$ bounds both $h_{\rm loc}(f)$ and
$h_{\rm top}(C(f),f)$ for a $C^r$ interval map $f$, we see that as soon as $f$ 
satisfies $\log\|f'\|_{\infty}/r< h_{\rm top}(f)$, then there are only finitely 
many ergodic maximal measures. This condition is optimal in the sense that 
there exist $C^r$ interval maps $f$ with infinitely many ergodic maximal
measures satisfying the equality: $h_{\rm top}(f)=\log\|f'\|_\infty/r$ (see
\cite{BuzziSIM}).

\medbreak

The remaining open question therefore is that of existence of maximal
measures for interval maps with infinite critical set and finite smoothness. 
Indeed, existence had only been proved when $h_{\rm loc}(f)=0$, which is known 
to be the case for piecewise monotonic and $C^\infty$ maps.
We give an answer to this question in the $C^1$ case:

\medbreak\noindent
{\bf Theorem~\ref{theo:main-theorem}\ }{\it 
Let $f\colon [0,1]\to [0,1]$ be a $C^1$ map and
$C(f)$ the critical set of $f$.  Assume that
$h_{{\rm top}}(f)>h_{{\rm top}}(C(f),f)+h_{{\rm loc}}(f)$. Then $f$ admits a maximal measure.
Moreover, the number of ergodic maximal measures is finite and, if $f$
is transitive, the maximal measure is unique.
}
\medbreak

Using the previously mentioned bounds on 
local entropy and entropy of the critical set in terms of the
differentiability of the map, we get a condition that is easier to
compute:

\medbreak\noindent
{\bf Corollary }{\it
Let $f\colon [0,1]\to[0,1]$ be a $C^r$ map of the interval with $r\geq 1$.

If $h_{\rm top}(f)>2\log\|f'\|_\infty/r$, then $f$ has a finite non-zero
number of maximal measures.
}
\medbreak

\noindent
{\it Remark.}
This Corollary is relevant only for $r>2$ because $h_{{\rm top}}(f)\leq
\log\|f'\|_{\infty}$ (see, e.g., \cite{DGS}).

\medbreak
The finiteness result in Theorem~\ref{theo:main-theorem} was
already proved in \cite{BuzziThese,BuzziSIM} under weaker hypothesis. 
We nevertheless include it for completeness and also because it is
obtained in a completely different way here, in fact as a slight
variation of the proof of existence.

\medbreak

For interval maps such that $h_{{\rm top}}(C(f),f)=0$,
Theorem~\ref{theo:main-theorem} is sharp: for all $1\leq r<+\infty$ there
exist $C^r$ interval maps that have no maximal measure and such that
$h_{{\rm top}}(C(f),f)=0$ and 
$h_{{\rm top}}(f)=h_{loc}(f)=\frac{1}{r}\log\|f'\|_{\infty}$ (see 
\cite{BuzziSIM, Ruette}). These examples can be adapted to show that
this Theorem indeed applies to maps such that the metric entropy 
$\mu\mapsto h(\mu,f)$ is {\em not upper semicontinuous}, in contrast
to all other existence results for interval maps.
In fact, we get examples with the defect in upper semicontinuity
as large as $h_{\rm loc}(f)=\frac1r\log\|f'\|_\infty$. 

\medbreak

For interval maps with $h_{{\rm top}}(C(f),f)>0$ we do not know 
whether the Theorem is optimal.
Actually, the above Corollary was conjectured, without the factor of $2$ in
\cite{BuzziSIM}. We still believe in that conjecture.
In fact, we make the bolder

\medbreak\noindent
{\bf Conjecture }{\it
If $f\colon [0,1]\to[0,1]$ is a {\emph continuous} interval map such that
$h_{\rm top}(f)>h_{\rm loc}(f)$ then $f$ admits measures with maximal entropy.
}
\medbreak

A way to prove this, would be to establish that for interval maps, if
$\mu_1,\mu_2,\dots$ is a sequence of invariant probability measures
vaguely converging to some $\mu_*$, then:
 $$
       \limsup_{n\to\infty} h(\mu_n,f) \leq \max(h(\mu_*,f),h_{\rm loc}(f)).
 $$
That is, the obvious bound with a {\it sum} could be replaced for interval
maps by a {\it maximum} (this is obviously false in higher dimensions).

\medbreak\noindent
{\it Remark.}
For interval maps with $h_{\rm top}(C(f),f)=h_{\rm top}(f)$ which
are not $C^{1+\alpha}$, $\alpha>0$, the relevant dynamics may be
completely missed  by the Markov diagram. Hence proof of the above
conjecture in its full generality probably requires a different method 
from the one used in this paper.

\subsection*{Outline of the paper}
We begin by recalling the relevant theory of countable Markov shifts.
In the second section, we introduce the Markov diagram, i.e., a countable
Markov shift representing the interval map. In the third section
we prove that measures escaping to infinity in this Markov diagram
have small entropy. Finally we deduce the Main Theorem
from the previous results.

\section{Background on Markov shifts}\label{sec:Markov}
\subsection{Graphs and Markov shifts}
Let $G$ be an oriented graph with a countable set of vertices. 
If $u,v$ are two vertices,
there is at most one arrow $u\to v$. A {\em path} of length $n$
is a sequence of vertices
$(u_0,\cdots,u_n)$ such that $u_i\to u_{i+1}$ is an arrow 
in $G$ for $0\leq i<n$. This path is called a {\em loop} if $u_0=u_n$.
The graph $G$ is called {\em strongly connected},
if for all vertices $u,v$ there exists a path in $G$ from $u$ to $v$.
A {\em connected component} $G'$ is a strongly connected subgraph which
is maximal for inclusion;
two connected components are equal or disjoint.

Let $u$ be a vertex. In \cite{VJ} Vere-Jones defines the following 
quantities.
\begin{itemize}
\item $p_{u}^G(n)$ is the number of loops $(u_0,\cdots,u_n)$ 
such that $u_0=u_n=u$; $R_{u}(G)$ is the radius of convergence
of the series $\sum p_{u}^G(n)z^n$.
\item $f_{u}^G(n)$ is the number of loops $(u_0,\cdots,u_n)$ 
such that $u_0=u_n=u$ and $u_i\not=u$ for $0<i<n$; 
$L_{u}(G)$ is the radius of convergence
of the series $\sum f_{u}^G(n)z^n$.
\end{itemize}

If $G$ is strongly connected, then $R_{u}(G)$ does not depend on $u$; 
in this case it is denoted by $R(G)$.

\medbreak
Let $G$ be an oriented graph.
$\Sigma_+(G)$ is the set of one-sided infinite paths in $G$, that is,
$$
\Sigma_+(G)=\{(v_n)_{n\in{\mathbb N}}\mid \forall n\in {\mathbb N}, v_n\to v_{n+1} 
\mbox{ in } G \}.
$$
$\sigma$ is the shift on $\Sigma_+(G)$, $\sigma((v_n)_{n\in{\mathbb N}})=
(v_{n+1})_{n\in{\mathbb N}}$. The {\em
Markov shift}  on the graph $G$ is the system $(\Sigma_+(G),\sigma)$.

The set $G$ is endowed with the discrete topology and $\Sigma_+(G)$
is endowed with the induced topology of $G^{{\mathbb N}}$, which has the product
topology. The space $\Sigma_+(G)$ is not compact unless $G$ is finite.
The system $(\Sigma_+(G),\sigma)$ is transitive if and only if
the graph $G$ is strongly connected. 

If $S\subset G$, the {\em cylinder} $[S]$ is defined as
$$
[S]=\{(u_n)_{n\in{\mathbb N}} \in \Sigma_+(G)\mid u_0\in S\}.
$$

\subsection{Entropy and maximal measures}
If $G$ is an oriented graph, the {\em Gurevich entropy} of $G$ is
defined as
$$
h(G)=\sup_{u\in G}-\log R_{u}(G).
$$
If $G'$ is a connected component of $G$, then $R_u(G)=R(G')$ for all
$u\in G'$, hence 
$$
h(G)=\sup\{ h(G')\mid G' \mbox{ connected component of } G\}.
$$

Moreover, the variational principle is still valid for the Gurevich
entropy.

\begin{theorem}[Gurevich \cite{Gur1}]\label{theo:variational-principle}
Let $G$ be an oriented graph. Then
$$
h(G)=\sup\{h(\mu,\sigma)\mid \mu
\ \sigma\mbox{-invariant probability measure on } \Sigma_+(G)\}.
$$
Moreover, the supremum can be taken on ergodic Markov measures only.
\end{theorem}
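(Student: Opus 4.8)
The plan is to prove the two inequalities separately, reducing throughout to the strongly connected case via the identity $h(G)=\sup\{h(G')\mid G'\text{ a connected component}\}$ recalled above. I will use that for a strongly connected graph the concatenation of loops based at a fixed vertex $u$ gives $p_u^G(m+n)\ge p_u^G(m)\,p_u^G(n)$, so that $\log p_u^G(n)$ is superadditive and $-\log R_u(G)=\lim_n\frac1n\log p_u^G(n)=\sup_n\frac1n\log p_u^G(n)$.

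For the lower bound $h(G)\le\sup_\mu h(\mu,\sigma)$, I would fix a connected component $G'$ and an exhausting sequence of finite strongly connected subgraphs $G_k\uparrow G'$. Each $(\Sigma_+(G_k),\sigma)$ is a subshift of finite type on a compact space, so the Perron--Frobenius theorem supplies an ergodic (Parry) Markov measure $\mu_k$ with $h(\mu_k,\sigma)=h(G_k)=\log\lambda_k$, where $\lambda_k$ is the spectral radius of the adjacency matrix. Viewing $\mu_k$ as a measure on $\Sigma_+(G)$, it suffices to check $h(G_k)\to h(G')$; this follows because $p_u^{G_k}(n)\uparrow p_u^{G'}(n)$ for each fixed $n$, whence $\sup_k h(G_k)=\sup_k\sup_n\frac1n\log p_u^{G_k}(n)=\sup_n\frac1n\log p_u^{G'}(n)=h(G')$. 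Since the $\mu_k$ are ergodic Markov measures, this simultaneously yields the final refinement of the statement.

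For the upper bound I would first reduce to ergodic $\mu$ using the ergodic decomposition and the affineness of $\mu\mapsto h(\mu,\sigma)$. An ergodic measure is carried by a single connected component $G'$, so I may fix a vertex $u\in G'$ with $\mu([u])>0$. The first-return map $\sigma_u$ to the cylinder $[u]$ is (measurably) a full shift on the countable alphabet of first-return loops $a$ at $u$, with lengths $n_a$; by Abramov's formula $h(\mu,\sigma)=\mu([u])\,h(\mu_u,\sigma_u)$, and since the return-loop partition $\xi$ generates for $\sigma_u$, one has $h(\mu_u,\sigma_u)\le H_{\mu_u}(\xi)=-\sum_a p_a\log p_a$ with $p_a=\mu_u(a)$.

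The crux, and the step I expect to be the main obstacle, is to convert this into the bound $-\log R_u(G)$. I would invoke two inputs. First, Kac's formula gives $\sum_a p_a n_a=1/\mu([u])$. Second, from the renewal identity $\sum_n p_u^G(n)z^n=\bigl(1-\sum_n f_u^G(n)z^n\bigr)^{-1}$ and the fact that the left side is $\ge 1$ on $(0,R_u(G))$, one obtains the Vere-Jones inequality $\sum_a R_u(G)^{n_a}=\sum_n f_u^G(n)R_u(G)^n\le 1$ by monotone convergence. Applying Gibbs' inequality with the subprobability weights $c_a=R_u(G)^{n_a}$ then yields $-\sum_a p_a\log p_a\le-\sum_a p_a\log R_u(G)^{n_a}=-\log R_u(G)\sum_a p_a n_a=(-\log R_u(G))/\mu([u])$, so that $h(\mu,\sigma)\le-\log R_u(G)=h(G')\le h(G)$. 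The delicate points to verify are that $\xi$ genuinely generates for $\sigma_u$, that the full-shift structure of the induced system is exact (every finite concatenation of return loops is admissible, and conversely), and the justification of Kac and Abramov on the non-compact space $\Sigma_+(G)$; the case $h(G)=+\infty$ (some $R_u(G)=0$) is handled directly by the finite-subgraph approximation of the lower bound.
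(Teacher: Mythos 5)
The paper gives no proof of this statement: it is quoted as background directly from Gurevich \cite{Gur1}, so there is no internal argument to compare yours against. On its own terms your proof is correct and is essentially the classical one. For the lower bound, exhausting a connected component $G'$ by finite strongly connected subgraphs $G_k$ and taking their Parry measures is the standard device; the interchange $\sup_k\sup_n=\sup_n\sup_k$ together with $p_u^{G_k}(n)\uparrow p_u^{G'}(n)$ and Fekete's lemma does give $h(G_k)\to h(G')$, and since the $\mu_k$ are ergodic Markov measures this also settles the last clause of the statement. For the upper bound, the chain Abramov--Kac--Gibbs with weights $c_a=R_u(G)^{n_a}$ is exactly the right mechanism, and the inequality $\sum_n f_u^G(n)R(G)^n\le 1$ that you extract from the renewal identity is precisely the quantity tabulated in Table \ref{tab:classification} (it is $\le 1$ in all three Vere-Jones classes), so that step is secure. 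The points you flag all go through: the first-return partition $\xi$ generates for $\sigma_u$ because the sequence of successive first-return loops reconstructs the entire itinerary of a path starting in $[u]$; Abramov and Kac are purely measure-theoretic and need no compactness; and an ergodic measure charges some cylinder $[u]$ (the $[v]$, $v\in G$, partition $\Sigma_+(G)$) and is carried by the strongly connected component of $u$, since almost every path visits any two charged vertices infinitely often. Two small caveats worth writing out if you formalize this: (a) the identity $-\log R_u(G)=\sup_n\frac1n\log p_u^G(n)$ must be read on the sub-semigroup of loop lengths (the $n$ with $p_u^G(n)=0$ contribute $-\infty$ and are harmless); and (b) Gibbs' inequality should be applied in the form $\sum_a p_a\log(p_a/c_a)\ge 0$ for subprobability weights, which bounds $H_{\mu_u}(\xi)$ by $(-\log R_u(G))/\mu([u])$ without having to assume a priori that $H_{\mu_u}(\xi)$ is finite.
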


A {\em maximal measure} is a $\sigma$-invariant
probability measure $\mu$ on $\Sigma_+(G)$ whose 
entropy is maximal, that is, $h(\mu,\sigma)=h(G)$.

An ergodic measure on $\Sigma_+(G)$ is necessarily supported by some
$\Sigma_+(G')$, where $G'$ is a connected component of $G$. Therefore
an ergodic maximal measure on $\Sigma_+(G)$ is a maximal measure for
a connected component $G'$ with $h(G')=h(G)$.

\newcommand{\lignelarge}{\lower1.5ex\hbox{\rule{0ex}{4ex}}}

\begin{table}[ht] 
\centerline{
\begin{tabular}{l|c|c|c}
                     & transient   & null      & positive  \\
                     &             & recurrent & recurrent \\
\hline
\lignelarge
$\displaystyle\sum_{n>0} f^G_{v}(n)R(G)^n$  & $<1$        & $1$       & $1$     \\
\hline
\lignelarge
$\displaystyle\sum_{n>0} nf^G_{v}(n)R(G)^n$ &$\leq+\infty$&$+\infty$  &$<+\infty$ \\
\end{tabular}
}
\caption{classification of strongly connected graphs into
transient, null recurrent and positive recurrent graphs 
(it does not depend on the vertex $v$).
\label{tab:classification}}
\end{table}

A strongly connected oriented graph $G$ is called {\em transient, null
recurrent} or {\em positive recurrent} depending on the values of the
series $\sum f_{v}^G(n)z^n$ and its derivative at point $z=R(G)$
(see Table \ref{tab:classification}). This classification is due
to Vere-Jones \cite{VJ}. In \cite{Gur2} Gurevich shows that, if $G$ is 
strongly connected, the Markov shift $(\Sigma_+(G),
\sigma)$ admits a maximal measure if and only if $G$ is positive recurrent,
and in this case this measure is unique and it is an ergodic 
Markov measure. 

We sum up the results above in the next Theorem.

\begin{theorem}\label{theo:existence-Markov}
Let $G$ be an oriented graph. 
\begin{enumerate}
\item If $\nu$ is an ergodic maximal measure on $\Sigma_+(G)$,
then $\nu$ is supported by a connected component of maximal entropy which is
positive recurrent. 
\item If $G$ is strongly connected then it admits at most one maximal
measure, and when it exists it is an ergodic Markov measure.
\end{enumerate}
\end{theorem}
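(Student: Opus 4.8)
The plan is to assemble the statement from the ingredients already recorded above: Gurevich's variational principle (Theorem~\ref{theo:variational-principle}), the fact that any ergodic measure is carried by a single connected component, and Gurevich's characterization of positive recurrence from \cite{Gur2}. Each part then reduces to a short argument comparing entropies, so the ``proof'' is really a careful bookkeeping of facts already available.

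For part (1), I would start from an ergodic maximal measure $\nu$ on $\Sigma_+(G)$. Because $\nu$ is ergodic, it is supported by $\Sigma_+(G')$ for a single connected component $G'$, as recalled just before the theorem. The key step is to identify $G'$ as a component of maximal entropy: by maximality $h(\nu,\sigma)=h(G)$, while viewing $\nu$ as a measure on the subsystem $\Sigma_+(G')$ and applying the variational principle for $G'$ gives $h(\nu,\sigma)\le h(G')$; since $h(G)=\sup_{G'}h(G')$ forces $h(G')\le h(G)$, the two inequalities squeeze $h(G')=h(G)$. Thus $\nu$ is itself a maximal measure for the strongly connected graph $G'$, and I would then invoke \cite{Gur2}: a strongly connected graph that carries a maximal measure must be positive recurrent. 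This yields positive recurrence of $G'$ and finishes part (1).

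For part (2), the claim is essentially a direct transcription of the cited result of Gurevich. When $G$ is strongly connected, \cite{Gur2} asserts that a maximal measure exists if and only if $G$ is positive recurrent, and that in the positive recurrent case it is unique and is an ergodic Markov measure. Reading this as a dichotomy---no maximal measure when $G$ is transient or null recurrent, exactly one ergodic Markov maximal measure otherwise---yields precisely ``at most one maximal measure, and when it exists it is an ergodic Markov measure.''

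I do not expect a genuine obstacle, since all of the analytic difficulty is already packaged into the Vere-Jones classification and the theorem of Gurevich. The only point requiring a little care is the entropy comparison in part (1): one must apply the variational principle on the subsystem $\Sigma_+(G')$ rather than on all of $\Sigma_+(G)$, and note that passing from $\nu$ on $\Sigma_+(G)$ to $\nu$ on its support loses no entropy because $\nu$ already sits on $\Sigma_+(G')$.
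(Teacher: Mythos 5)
Your proposal is correct and follows exactly the route the paper intends: the paper gives no separate proof, presenting the theorem as a summary of Theorem~\ref{theo:variational-principle}, the remark that an ergodic measure lives on a single connected component, and Gurevich's positive-recurrence criterion from \cite{Gur2}. Your entropy squeeze in part (1) and the direct transcription of \cite{Gur2} in part (2) are precisely the intended bookkeeping.
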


\begin{remark}
Two-sided infinite paths (i.e. paths indexed by ${\mathbb Z}$) are often considered
instead of one-sided infinite paths. Gurevich stated his
results for such invertible Markov shifts. However they are still valid
in the non-invertible case that interests us.
\end{remark}

\subsection{Almost maximal measures escaping to infinity}
\label{subsecMS}

Let $G$ be an oriented graph and $G\cup\{\infty\}$ its one-point
compactification. The set $\overline{\Sigma_+(G)}\subset 
(G\cup\{\infty\})^{{\mathbb N}}$ is compact and so is the set of $\sigma$-invariant
measures on $\overline{\Sigma_+(G)}$ \cite{DGS}. Gurevich and Savchenko
showed that if $G$ is either transient or null recurrent then any
sequence of ergodic measures $(\nu_n)_{n\geq 1}$ whose entropy tends to
$h(G)$ converges to the Dirac measure $\delta_{\infty}$ on 
$\overline{\Sigma_+(G)}$ (for the weak-* topology). This is
Theorem~6.3(1) in \cite{GS} for a null potential, we restate the
measures convergence in term of cylinders then we generalise this result 
to all oriented graphs with no maximal measure.

\begin{theorem}[Gurevich-Savchenko \cite{GS}]\label{theo:GS}
Let $G$ be a strongly connected graph of finite entropy which is
not positive recurrent. If $(\nu_n)_{n\geq 1}$ is a sequence of
ergodic measures such that $\displaystyle\lim_{n\to+\infty}h(\nu_n,\sigma)
=h(G)$ then for all finite subsets of vertices $F$ one has
$\displaystyle\lim_{n\to+\infty}\nu_n([F])=0$.
\end{theorem}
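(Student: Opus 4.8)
The plan is to reduce the general statement to the Gurevich--Savchenko result (Theorem~\ref{theo:GS} as stated for null recurrent graphs), which is cited as Theorem~6.3(1) of \cite{GS}. The claim to be proved extends that convergence, originally established for the null recurrent case, to all strongly connected graphs of finite entropy that are not positive recurrent---that is, it must additionally cover the \emph{transient} case. So first I would separate the two cases according to Vere-Jones' classification (Table~\ref{tab:classification}): if $G$ is null recurrent, the conclusion is exactly the cited theorem, once one translates the stated weak-$*$ convergence $\nu_n\to\delta_\infty$ on $\overline{\Sigma_+(G)}$ into the statement about cylinders. Indeed, for a finite set of vertices $F$ the cylinder $[F]$ is a clopen subset of $\Sigma_+(G)$ that is bounded away from the point $\infty$, so $\nu_n([F])\to\delta_\infty([F])=0$ follows directly from weak-$*$ convergence applied to the (continuous) indicator of $[F]$.

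The substantive work is therefore the transient case. Here I would argue that transience can be handled by comparison with a null recurrent graph, or directly by an estimate on loop counts. The natural approach is to fix a vertex $v\in G$ and use the quantities $f_v^G(n)$ and $R(G)$. For a transient graph, $\sum_{n>0} f_v^G(n)R(G)^n<1$ strictly, so the first-return system to $v$ has strictly sub-critical mass. The key point is that any ergodic measure $\nu$ giving mass $\nu([v])\geq\alpha>0$ to the single vertex $v$ has its entropy controlled, via the Abramov/Kac formula for the induced (first-return) map, by the return-time distribution; quantitatively, the entropy $h(\nu,\sigma)$ is bounded above by a quantity strictly below $h(G)=-\log R(G)$ when the normalized return-time weights are forced to concentrate because the total available mass $\sum_n f_v^G(n)R(G)^n$ is $<1$. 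Thus a sequence with $h(\nu_n,\sigma)\to h(G)$ cannot keep $\nu_n([v])$ bounded below, giving $\nu_n([v])\to0$; summing over the finitely many vertices in $F$ then yields $\nu_n([F])\to0$.

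The main obstacle I anticipate is making the entropy/mass estimate in the transient case uniform and quantitative enough to force the limit. The issue is that transient graphs do not carry a maximal measure, so there is no equilibrium return distribution to compare against directly; one must instead show that \emph{any} invariant measure with $\nu([v])$ bounded below has entropy bounded \emph{strictly} away from $h(G)$, with the gap depending only on the lower bound $\alpha$ on $\nu([v])$ and on the deficit $1-\sum_n f_v^G(n)R(G)^n>0$. I would obtain this by writing, for the induced system on $[v]$, the variational expression for entropy in terms of the return partition and applying the standard inequality $h(\nu,\sigma)\cdot\mathbb{E}_\nu[\text{return time}\mid v]=h_{\mathrm{ind}}$ together with the Gibbs-type bound $h_{\mathrm{ind}}\leq -\log\bigl(\sum_n f_v^G(n)R(G)^n\bigr) + (\text{entropy of return distribution})$; the strict inequality in the transient case is what produces the required gap.

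An alternative, perhaps cleaner, route that I would also consider is to embed the transient graph into an auxiliary graph or to add a maximal loop so as to reduce to the null recurrent (or even positive recurrent boundary) case already covered, and then transfer the conclusion back; but I expect the direct induced-map estimate above to be the most transparent, since it isolates exactly where the non-positive-recurrence hypothesis is used.
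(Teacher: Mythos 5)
The paper does not actually prove this statement: it is quoted from Gurevich--Savchenko (Theorem~6.3(1) of \cite{GS} with zero potential), and the paragraph preceding Theorem~\ref{theo:GS} makes clear that the cited result already covers \emph{both} the transient and the null recurrent cases, so no case distinction is needed there; the only content the paper adds is the routine translation from the weak-$*$ convergence $\nu_n\to\delta_\infty$ to the cylinder statement, which you carry out correctly (the indicator of $[F]$ is continuous on $\overline{\Sigma_+(G)}$ because $F$ is finite, hence clopen in $G\cup\{\infty\}$). Your proposal therefore does strictly more than the paper: you borrow only the null recurrent case from \cite{GS} and give an independent argument for the transient case. That argument is sound and closes as follows: for $\nu$ ergodic with $\nu([v])>0$, code the induced system on $[v]$ by first-return loops (a generating partition $\nu$-a.e.\ by Poincar\'e recurrence); with $p_a=\bar\nu([a])$ and loop lengths $\ell_a$, Abramov and Kac give $h(\nu,\sigma)=\nu([v])\,h(\bar\nu)\leq\nu([v])\,H(p)$ and $\sum_a p_a\ell_a=1/\nu([v])$, and the Gibbs inequality $H(p)\leq s\sum_a p_a\ell_a+\log\sum_a e^{-s\ell_a}$ applied with $s=h(G)=-\log R(G)$ yields $h(\nu,\sigma)\leq h(G)+\nu([v])\log F_v$, where $F_v=\sum_{n>0} f_v^G(n)R(G)^n<1$ by transience; this is exactly the uniform gap $\nu([v])\,|\log F_v|$ you were after, and summing over $v\in F$ finishes. (The inequality as you wrote it, $h_{\mathrm{ind}}\leq-\log F_v+H(\text{return distribution})$, is trivially true but useless since $-\log F_v>0$; the multiplier $s$ must be attached to the mean return time as above.) Two caveats: your treatment of the null recurrent case remains a citation, so the proof is not self-contained there --- but neither is the paper's; and the genuine extension performed in the paper (to graphs that are not strongly connected but admit no maximal measure) is the content of Proposition~\ref{prop:sequence-of-measures}, not of Theorem~\ref{theo:GS} itself.
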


\begin{proposition}\label{prop:sequence-of-measures}
Let $G$ be an oriented graph of finite entropy.
Suppose that $\Sigma_+(G)$ admits no maximal measure. Then 
there exists a sequence
of ergodic Markov measures $(\nu_n)_{n\geq 1}$ such that
$\displaystyle\lim_{n\to+\infty}h(\nu_n,\sigma)=h(G)$ and for all finite subsets of
vertices $F$, $\displaystyle\lim_{n\to+\infty}\nu_n([F])=0$.
\end{proposition}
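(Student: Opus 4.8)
The plan is to reduce the statement to the strongly connected case already settled by Theorem~\ref{theo:GS}. The structural facts I rely on are that, $G$ having a countable vertex set, it decomposes into countably many connected components $G_1,G_2,\dots$; that every ergodic invariant measure is carried by a single $\Sigma_+(G_i)$; and that $h(G)=\sup_i h(G_i)$. The first thing I would do is translate the hypothesis ``$\Sigma_+(G)$ has no maximal measure'' into a statement about components: if some component $G'$ satisfied $h(G')=h(G)$ and were positive recurrent, then by Gurevich's theorem (recorded in Theorem~\ref{theo:existence-Markov}) it would carry a maximal measure of entropy $h(G')=h(G)$, which would also be maximal for $G$, a contradiction. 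Hence no maximal-entropy component is positive recurrent.

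This dichotomy splits the argument into two cases. In the first case there is a component $G'$ with $h(G')=h(G)$. By the previous remark $G'$ is strongly connected, of finite entropy, and not positive recurrent, so Theorem~\ref{theo:GS} applies to it. Using the variational principle (Theorem~\ref{theo:variational-principle}) for $G'$, I choose ergodic Markov measures $\nu_n$ on $\Sigma_+(G')$ with $h(\nu_n,\sigma)\to h(G')=h(G)$; Theorem~\ref{theo:GS} then yields $\nu_n([F\cap G'])\to 0$ for every finite $F$, and since each $\nu_n$ is carried by $\Sigma_+(G')$ one has $\nu_n([F])=\nu_n([F\cap G'])\to 0$.

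In the second case every component satisfies $h(G_i)<h(G)$. Since the supremum of the $h(G_i)$ equals $h(G)$ but is not attained, I can extract a sequence of pairwise distinct components $G'_n$ with $h(G'_n)\to h(G)$; distinctness is automatic, since if one component recurred infinitely often along a sequence whose entropy tends to $h(G)$, its (fixed) entropy would have to equal $h(G)$. On each $G'_n$ I pick, again via the variational principle, an ergodic Markov measure $\nu_n$ with $h(\nu_n,\sigma)>h(G'_n)-1/n$, so that $h(\nu_n,\sigma)\to h(G)$. Here escape to infinity is free: a finite set $F$ meets at most $|F|$ of the disjoint components, so all but finitely many $G'_n$ are disjoint from $F$, whence $\nu_n([F])=0$ for $n$ large.

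The only non-formal point — and hence what I would treat as the main obstacle — is arranging the reduction so that the escape conclusion holds for every finite $F$ simultaneously, not merely for subsets of one component: in the first case this is handled by restricting $F$ to $G'$ before invoking Theorem~\ref{theo:GS}, and in the second case it follows from disjointness of components. I would close by observing that both cases produce ergodic Markov measures, as required, and that a relabelling merges them into the single desired sequence.
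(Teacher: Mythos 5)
Your proposal is correct and follows essentially the same route as the paper: the same dichotomy on whether some connected component attains $h(G)$, with Theorem~\ref{theo:GS} handling the attained case (after noting the component cannot be positive recurrent) and a diagonal choice of measures on distinct components, which eventually miss any finite $F$, handling the other. The only difference is that you spell out a couple of small points (why the maximal-entropy component is not positive recurrent, why the components in the second case can be taken distinct) that the paper leaves implicit.
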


\begin{proof}
Suppose first that $G$ has a connected component $G'$ with $h(G)=h(G')$.
By Theorem \ref{theo:variational-principle} there exists a sequence of
ergodic Markov measures $(\nu_n)_{n\geq 1}$ on $\Sigma_+(G')$ such that
$\lim_{n\to+\infty}h(\nu_n,\sigma)=h(G')$. The measures $\nu_n$
can be seen as measures on $\Sigma_+(G)$. By assumption $\Sigma_+(G')$ 
admits no maximal measure thus $G'$ is not positive recurrent by
Theorem \ref{theo:existence-Markov} and Theorem \ref{theo:GS}
applies.

\medbreak
Now suppose inversely that $G$ has no connected component of entropy 
equal to $h(G)$.
This assumption implies that there exists a sequence of distinct connected
components $(G_n)_{n\geq 0}$ such that $\lim_{n\to+\infty}h(G_n)=h(G)$.
According to Theorem \ref{theo:variational-principle}
there exists an ergodic Markov measure $\nu_k$ on $\Sigma_+(G_{k})$
such that $h(\nu_k,\sigma)\geq h(G_{k})-\frac{1}{k}$. This implies that
$\lim_{k\to+\infty}h(\nu_k,\sigma)=h(G)$. Moreover, if $F$ is a finite
subset of vertices of $G$, there exists $n$ such that $F\cap \bigcup_{k\geq n}
G_k=\emptyset$, thus $\nu_k([F])=0$ for all $k\geq n$.
\end{proof}

\begin{proposition}\label{prop:sequence-of-measures2}
Let $G$ be an oriented graph of finite non-zero entropy.
Suppose that $(\nu_k)_{k\geq 1}$ is a sequence of distinct ergodic maximal
measures for $\Sigma_+(G)$. Then for all finite subsets of vertices $F$, 
one has $\displaystyle\lim_{n\to+\infty}\nu_n([F])=0$.
\end{proposition}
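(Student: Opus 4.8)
The plan is to reduce everything to the structure theorem for ergodic maximal measures, Theorem~\ref{theo:existence-Markov}, together with the elementary fact that distinct connected components of $G$ are disjoint; no genuine analysis is required, since the statement is a soft consequence of the classification. First I would localize each measure. By Theorem~\ref{theo:existence-Markov}(1), each $\nu_k$, being an ergodic maximal measure on $\Sigma_+(G)$, is supported by a connected component $G_k$ of $G$ with $h(G_k)=h(G)$ which is positive recurrent; in particular $\nu_k(\Sigma_+(G_k))=1$.

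Next I would use the uniqueness half of Gurevich's theorem to separate the measures. Each $G_k$ is strongly connected, so by Theorem~\ref{theo:existence-Markov}(2) it carries at most one maximal measure. Consequently the assignment $k\mapsto G_k$ is injective: if $G_j=G_k$ then $\nu_j$ and $\nu_k$ would be two maximal measures on the same component, forcing $\nu_j=\nu_k$, contrary to the hypothesis that the $\nu_k$ are distinct. Hence $(G_k)_{k\geq1}$ is a sequence of pairwise distinct connected components, and since two connected components are equal or disjoint, they are pairwise disjoint as vertex sets.

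Finally I would conclude by a counting argument. Fix a finite set of vertices $F$. Because the $G_k$ are pairwise disjoint, each vertex of $F$ lies in at most one of them, so at most $|F|$ of the components meet $F$; by injectivity of $k\mapsto G_k$ this means the index set $\{k\mid F\cap G_k\neq\emptyset\}$ is finite. Choose $N$ larger than all its elements. For $k\geq N$ we have $F\cap G_k=\emptyset$, and since every path in $\Sigma_+(G_k)$ starts at a vertex of $G_k$ (hence outside $F$), the cylinder $[F]$ is disjoint from $\Sigma_+(G_k)$; thus $\nu_k([F])=\nu_k([F]\cap\Sigma_+(G_k))=0$. In particular $\lim_{n\to+\infty}\nu_n([F])=0$ (the sequence is in fact eventually zero).

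The argument has no serious obstacle, but the step requiring care is the injectivity of $k\mapsto G_k$: this is precisely where the uniqueness of the maximal measure on a positive recurrent component must be invoked. Without it, distinct measures could a priori share a single component, and the disjointness—hence the whole counting—would collapse. The finite-entropy hypothesis enters only implicitly, through the recurrence classification underlying Theorem~\ref{theo:existence-Markov}.
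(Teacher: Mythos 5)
Your proof is correct and follows essentially the same route as the paper: apply Theorem~\ref{theo:existence-Markov} to place each $\nu_k$ on a positive recurrent component of maximal entropy, use uniqueness of the maximal measure on a strongly connected component to see the components are pairwise distinct (hence disjoint), and conclude that a finite $F$ misses all but finitely many of them. The paper's version is terser but identical in substance; your explicit justification of the injectivity of $k\mapsto G_k$ is exactly the point the paper leaves implicit.
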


\begin{proof}
By Theorem \ref{theo:existence-Markov}, $\nu_n$ is supported by a
connected component $G_n$ and all the graphs $G_n$ are disjoint. Let
$F$ be a finite subset of vertices. There exists an integer $N$ such that
$F\cap G_n=\emptyset$ for all $n\geq N$, thus $\nu_n([F])=0$ for
all $n\geq N$.
\end{proof}

\section{The Markov diagram} \label{secDiagram}

This section is devoted to the reduction of the map on the interval
to a Markov shift. 

This reduction was introduced by Hofbauer \cite{Hofbauer} for piecewise 
monotonic maps (see also Takahashi for a special case \cite{Takahashi}). 
We need the variant introduced in
\cite{BuzziThese,BuzziSIM} for general interval maps. Let us recall its
definition.

Consider $f\colon [0,1]\to [0,1]$ a $C^1$ map and let $C(f)$ be the 
critical set of $f$, that is, the set of points in a neighbourhood
of which $f$ is not monotonic. Let $C_*$ be a finite subset of $[0,1]$ and
$C=C(f)\cup C_*$. The additional set $C_*$ will be needed in the proof of
Theorem \ref{theo:eps-delta}. It does not change anything to the
construction and does not affect the entropy of the critical set.
Indeed,
\begin{equation}\label{eq:htopC}
    h_{{\rm top}}(C,f)=\max(h_{{\rm top}}(C(f),f),h_{\rm top}(C_*,f)) = h_{\rm top}(C(f),f).
\end{equation}

Let ${\mathcal P}$ be the collection of the connected components of $[0,1]\setminus C$
and let ${\mathcal P}^*$ be the set of finite sequences $A_{-n}\dots A_0$, where
$A_i\in{\mathcal P}$.

The set $[A_0\dots A_n]_f$ is  defined as
$$
[A_0\dots A_n]_f=\{x\in[0,1]\mid f^i(x)\in A_i, 0\leq i\leq n\}
=\bigcap_{i=0}^n f^{-i}(A_i).
$$

\begin{lemma}\label{lem:sig-part}
Observe that:
\begin{itemize}
\item
$[A_0\dots A_n]_f$ is an open interval.
\item  $f^n$ restricted to $\displaystyle \overline{[A_0\dots A_n]_f}$ 
is a homeomorphism on its image.
\item $\displaystyle \overline{[A_0\dots A_n]_f}=\bigcap_{i=0}^n 
f^{-i}(\overline{A_i})$ if $[A_0\dots A_n]_f\ne\emptyset$.
\end{itemize}
\end{lemma}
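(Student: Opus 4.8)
The plan is to prove the three assertions simultaneously by induction on $n$, the crucial preliminary observation being that $f$ restricted to the closure $\overline{A}$ of any $A\in{\mathcal P}$ is a homeomorphism onto its image. Indeed, since $A$ is a connected component of $[0,1]\setminus C$ and $C\supseteq C(f)$, every point of $A$ has a neighbourhood on which $f$ is monotonic; a connectedness argument then shows $f$ is strictly monotonic on the interval $A$, and by continuity this extends to $\overline A$, so that $f|_{\overline A}$ is a homeomorphism onto a compact interval.

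For the first item I would induct using the identity
$$
[A_0\dots A_n]_f = A_0\cap f^{-1}\bigl([A_1\dots A_n]_f\bigr).
$$
If $[A_1\dots A_n]_f$ is an open interval $J$, then $A_0\cap f^{-1}(J)=(f|_{A_0})^{-1}(J)$ is the preimage of an open interval under the monotonic homeomorphism $f|_{A_0}$, hence again an open interval (possibly empty); the base case $[A_0]_f=A_0$ is immediate. For the second item, writing $f^n=f\circ f^{n-1}$ and noting $f^{n-1}(\overline{[A_0\dots A_n]_f})\subseteq\overline{A_{n-1}}$ by continuity, the restriction of $f^n$ to $\overline{[A_0\dots A_n]_f}$ is a composition of the homeomorphisms furnished by the inductive hypothesis for $f^{n-1}$ and by the preliminary observation for $f$ on $\overline{A_{n-1}}$; it is therefore a continuous strictly monotonic map on a compact interval, hence a homeomorphism onto its image.

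The third item is where the real work lies, since taking closures commutes neither with intersections nor with preimages, so a priori $\bigcap_{i=0}^n f^{-i}(\overline{A_i})$ could be strictly larger than $\overline{[A_0\dots A_n]_f}$. The inclusion $\overline{[A_0\dots A_n]_f}\subseteq\bigcap_i f^{-i}(\overline{A_i})$ is automatic because each $f^{-i}(\overline{A_i})$ is closed and contains $f^{-i}(A_i)$. For the reverse inclusion I would peel off the last symbol, writing, with $J:=[A_0\dots A_{n-1}]_f$,
$$
\bigcap_{i=0}^n f^{-i}(\overline{A_i}) = \overline{J}\cap f^{-n}(\overline{A_n}),
$$
where the inductive form of the closure identity $\overline J=\bigcap_{i=0}^{n-1}f^{-i}(\overline{A_i})$ is applicable because $[A_0\dots A_n]_f\subseteq J$ is nonempty. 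By the second item $\phi:=f^n|_{\overline J}$ is a homeomorphism of the compact interval $\overline J$ onto a compact interval, so the remaining identity becomes the one-dimensional statement
$$
\overline{\{x\in J:\;f^n(x)\in A_n\}} = \{x\in\overline J:\;f^n(x)\in\overline{A_n}\}.
$$

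I expect this endpoint analysis to be the main obstacle: one must ensure that every point of the right-hand side is approached from within the left-hand side, and the identity genuinely fails without the hypothesis $[A_0\dots A_n]_f\neq\emptyset$. For instance, if $\phi^{-1}(\overline{A_n})$ meets $\overline J$ only at an endpoint of $\overline J$ where $f^n$ takes a boundary value of $A_n$ (a tangential contact), the right-hand side reduces to a single point while the left-hand side is empty. The nonemptiness hypothesis guarantees that $\{x\in J:f^n(x)\in A_n\}$ is a nondegenerate open subinterval of $\overline J$, which excludes exactly these degenerate configurations; together with the homeomorphism property of $\phi$, which reduces the problem to comparing the endpoints of two genuine subintervals of $\overline J$, this lets me conclude by direct inspection.
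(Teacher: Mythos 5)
The paper states this lemma without proof (it is introduced with ``Observe that''), so there is no argument of the authors to compare yours against; your task was to supply the missing details, and your argument does so correctly. The induction via $[A_0\dots A_n]_f=A_0\cap f^{-1}\bigl([A_1\dots A_n]_f\bigr)$ for the first two items, and peeling off the last symbol for the third, is the natural route, and you have correctly isolated the only delicate point: the closure identity can fail when $f^n(\overline J)$ meets $\overline{A_n}$ only tangentially at an endpoint, and the hypothesis $[A_0\dots A_n]_f\neq\emptyset$ is exactly what rules this out, since it forces $f^n(\overline J)\cap A_n$ to be a nondegenerate interval whose closure is $f^n(\overline J)\cap\overline{A_n}$, and deleting the at most two points of $f^n(\overline J\setminus J)$ does not change that closure. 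One point worth making explicit: your preliminary observation that $f|_{\overline A}$ is a homeomorphism onto its image requires reading ``monotonic'' in the definition of $C(f)$ as \emph{strictly} monotonic --- under the non-strict reading $f$ could be constant on a subinterval of some $A\in{\mathcal P}$ and the second bullet of the lemma would simply be false --- but this is clearly the convention the paper intends, and under it your connectedness argument (the sets of points of local strict increase and of local strict decrease are open, disjoint, and cover $A$) is correct.
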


Say that $A_{-n}\dots A_0$ and
$B_{-m}\dots B_0$ are equivalent  if and only if there exists $0\leq k\leq
\min(n,m)$ such that:
 \begin{gather*}
       A_{-k}\dots A_0 = B_{-k} \dots B_0 \\
    f^k([A_{-k}\dots A_0]_f) = f^n([A_{-n} \dots A_0]_f) \\
    f^k([B_{-k}\dots B_0]_f) = f^m([B_{-m} \dots B_0]_f).
 \end{gather*}
We write in this situation $A_{-n}\dots A_0 \approx B_{-m}\dots B_0$.

If $k$ is minimal with the properties above, then $A_{-k}\dots A_0$
is called the {\em significant part} of $A_{-n}\dots A_0$. Two elements of
${\mathcal P}^*$ are equivalent if and only if they have the same significant part.
If $\alpha$ is the equivalence class of $A_{-n}\dots A_0$, we define 
$$
\left<\alpha\right>=f^n([A_{-n}\dots A_0]_f)=\bigcap_{i=0}^n f^i(A_{-i}).
$$

\medbreak
Let ${\mathcal D}$ be the set of the equivalence classes $\alpha\in{\mathcal P}^*/\approx$ with 
$\left<\alpha\right>\not=\emptyset$. If $\alpha,\beta\in{\mathcal D}$, there is an
arrow $\alpha\to\beta$ if and only if there exist $A_{-n},\dots,A_0,A_1
\in{\mathcal P}$ such that $\alpha$ is the equivalence class of $A_{-n}\dots A_0$
and $\beta$ is that of $A_{-n}\dots A_0A_1$. The countable oriented graph
${\mathcal D}$ is called the {\em Markov diagram} associated to $f$ with respect
to $C$. It defines a Markov shift $(\Sigma_+({\mathcal D}),\sigma)$ (see
Section \ref{sec:Markov}).

It is convenient to let ${\mathcal D}_n$ be the collection of equivalence
 classes generated by words of length at most $n+1$. We say
that an element $D$ of ${\mathcal D}_n\setminus{\mathcal D}_{n-1}$ has {\em level or height}
$H(D)=n$.

For $\alpha=(\alpha_n)_{n\geq 0}\in\Sigma_+({\mathcal D})$, let $A_n$ be the element
of ${\mathcal P}$ containing $\left<\alpha_n\right>$. The
sequence $A$ is the {\em projection} or the {\em itinerary} of $\alpha$.
Define
 $$
 \pi(\alpha)\in\bigcap_{n\geq0} \overline{[A_0\dots A_n]_f}
=\bigcap_{n\geq 0} f^{-n}(\overline{A_n}).
 $$

There is an arbitrary choice involved in the definition of $\pi(\alpha)$
when this intersection is a non-trivial interval.
Notice that this occurs only for countably many $\alpha$'s.

If $\nu$ is an atomless $\sigma$-invariant probability 
measure on $\Sigma_+({\mathcal D})$
then $\mu=\pi_*(\nu)$ is a $f$-invariant probability measure on $[0,1]$,
defined by $\mu(B)=\nu(\pi^{-1} B)$. Moreover, $\mu$ is ergodic if
$\nu$ is ergodic.

\begin{theorem}\cite{BuzziSIM}\label{theo:bijection}
Let $f\colon [0,1]\to[0,1]$ be a $C^1$ map that satisfies 
$h_{\rm top}(f)>h_{\rm top}(C,f)$ and let $\Sigma_+({\mathcal D})$, $\pi$ be defined as above.

Then the map $\nu\mapsto \mu=\pi_*(\nu)$ is
a bijection preserving entropy between the $\sigma$-ergodic measures $\nu$
and the $f$-ergodic measures $\mu$ such that $h(\nu,\sigma)>h_{{\rm top}}(C,f)$
and $h(\mu,f)>h_{{\rm top}}(C,f)$.

In particular, $h({\mathcal D})=h_{{\rm top}}(f)$ and $\pi$ induces a bijection between the
maximal measures  of $f$ and $\Sigma_+({\mathcal D})$.
\end{theorem}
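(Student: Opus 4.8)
=== PROOF PROPOSAL ===

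\medbreak\noindent
\textbf{Proof proposal for Theorem~\ref{theo:bijection}.}

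The plan is to prove the bijection by analyzing the map $\pi\colon\Sigma_+({\mathcal D})\to[0,1]$ in both directions, using the hypothesis $h_{\rm top}(f)>h_{\rm top}(C,f)$ to restrict attention to ergodic measures of entropy strictly exceeding $h_{\rm top}(C,f)$ on each side. First I would establish the \emph{forward direction}: given a $\sigma$-ergodic measure $\nu$ on $\Sigma_+({\mathcal D})$ with $h(\nu,\sigma)>h_{\rm top}(C,f)$, I want to show $\mu=\pi_*\nu$ is well-defined, $f$-invariant, ergodic, and entropy-preserving. The key subtlety is that $\pi$ is only a genuine (single-valued, continuous) semiconjugacy away from the countably many $\alpha$ where $\bigcap_n\overline{[A_0\dots A_n]_f}$ is a nontrivial interval; I would argue that $\nu$ gives zero mass to this exceptional set, so $\pi$ becomes $\nu$-almost-everywhere a bona fide factor map $\pi\circ\sigma=f\circ\pi$. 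Since $f\circ\pi=\pi\circ\sigma$ holds off a null set, $\mu$ is $f$-invariant and ergodic, and $h(\mu,f)\le h(\nu,\sigma)$ automatically because $\mu$ is a factor of $\nu$.

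The harder half is entropy preservation and surjectivity, and this is where the entropy hypothesis does the real work. For entropy preservation I would show that the fibers of $\pi$ are small enough not to lose entropy: the only way $\pi$ could fail to be invertible (and hence lose entropy) is through points whose itinerary does not determine the symbolic sequence in ${\mathcal D}$, and I expect such ambiguity to be controlled by the dynamics of the critical set $C$. Concretely, if $\Sigma_0\subset\Sigma_+({\mathcal D})$ denotes the set where $\pi$ fails to be injective onto its image, I would prove $h_{\rm top}(\Sigma_0,\sigma)\le h_{\rm top}(C,f)$, using Lemma~\ref{lem:sig-part} (the homeomorphism property of $f^n$ on $\overline{[A_0\dots A_n]_f}$) to trace non-injectivity back to orbits shadowing $C$. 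Because $h(\nu,\sigma)>h_{\rm top}(C,f)$, the measure $\nu$ cannot concentrate on $\Sigma_0$, so the Abramov–Rokhlin type estimate forces $h(\mu,f)=h(\nu,\sigma)$.

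For the \emph{reverse direction}, given an $f$-ergodic $\mu$ with $h(\mu,f)>h_{\rm top}(C,f)$, I would construct a $\sigma$-invariant lift $\nu$ on $\Sigma_+({\mathcal D})$ with $\pi_*\nu=\mu$. The natural candidate assigns to $\mu$-almost every $x$ its coding by the refining partition ${\mathcal P}$ and then projects into ${\mathcal D}$ via the significant-part equivalence $\approx$; one checks this coding is well-defined $\mu$-a.e. and measurable, again because the set of points with ambiguous coding has entropy at most $h_{\rm top}(C,f)$ and is therefore $\mu$-null. Invariance and ergodicity of $\nu$ follow from those of $\mu$, and the two constructions are mutually inverse $\mu$- and $\nu$-almost everywhere, giving the bijection on the relevant classes of measures. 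The main obstacle I anticipate is the uniform entropy bound $h_{\rm top}(\Sigma_0,\sigma)\le h_{\rm top}(C,f)$ on the set of coding ambiguities: making this precise requires a careful combinatorial count of how words in ${\mathcal P}^*$ sharing a significant part correspond to orbit segments that stay near $C$, and this is the step that genuinely uses the structure of the Markov diagram rather than soft arguments.

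Finally, the last assertion follows by specialization. Applying Theorem~\ref{theo:variational-principle} (Gurevich's variational principle) to ${\mathcal D}$ together with the entropy-preserving bijection on measures of entropy above $h_{\rm top}(C,f)$, and using $h_{\rm top}(f)>h_{\rm top}(C,f)$ so that maximal measures necessarily have entropy exceeding this threshold, I obtain $h({\mathcal D})=h_{\rm top}(f)$; restricting the bijection to measures of maximal entropy then identifies the maximal measures of $f$ with those of $\Sigma_+({\mathcal D})$.
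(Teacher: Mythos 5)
You should first note that the paper does not prove Theorem~\ref{theo:bijection} at all: it is imported verbatim from \cite{BuzziSIM} (where it is established over several sections), so there is no in-paper argument to compare your proposal against. Judged on its own, your outline does follow the broad strategy of the cited proof --- treat $\pi$ as an almost-everywhere factor map, control the loss of injectivity by the entropy of the critical set, and lift high-entropy measures of $f$ back to the diagram --- but the two load-bearing steps are only announced, not argued, and one of them is set up in a way that would not work as written.

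The first gap is the bound $h_{\rm top}(\Sigma_0,\sigma)\le h_{\rm top}(C,f)$ on the locus of coding ambiguity. You correctly identify this as the crux, but nothing in your sketch indicates how orbits with ambiguous coding are forced to shadow $C$ at a definite frequency, nor how one converts such shadowing into the entropy estimate; this is precisely the combinatorial content of \cite{BuzziSIM} and cannot be treated as a black box. The second, more serious, gap is the reverse direction. Your ``natural candidate'' lift --- take the ${\mathcal P}$-itinerary of $x$ and ``project into ${\mathcal D}$ via the significant-part equivalence'' --- is not well-defined: a one-sided infinite itinerary $A_0A_1A_2\dots$ does not determine a path $(\alpha_k)_{k\ge0}$ in ${\mathcal D}$, because the vertex $\alpha_k$ encodes the significant part of a word $A_{-n}\dots A_k$ reaching into the past, and a point of $[0,1]$ has no canonical past. (Lemma~\ref{lem:follower} shows the itinerary determines the path only once the initial vertex $\alpha_0$ is chosen.) The actual lifting argument has to produce that missing datum, e.g.\ by lifting finite orbit segments starting at level $0$ and extracting a weak-$*$ limit of the resulting empirical measures, and then must show that no entropy escapes to infinity in the diagram during this limit --- which is again where the hypothesis $h(\mu,f)>h_{\rm top}(C,f)$ enters quantitatively. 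Without these two arguments your proposal is a correct table of contents for the proof rather than a proof.
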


We shall need the following facts:

\begin{lemma}\label{lem:follower} 
If $\alpha_0\dots\alpha_n$ is a path on ${\mathcal D}$ and if $A_k$ is the
element of ${\mathcal P}$ containing $\left<\alpha_k\right>$, then
 $$
      \alpha_n \text{ is equivalent to } B_{-m}\dots B_0 A_1\dots A_n
 $$
for any $B_{-m}\dots B_0$ which is equivalent to $\alpha_0$.
\end{lemma}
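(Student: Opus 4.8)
The plan is to argue by induction on $n$, after isolating the one structural fact that does all the work: the relation $\approx$ is a \emph{right congruence}, meaning that if $U\approx V$ in ${\mathcal P}^*$ and $A\in{\mathcal P}$, then the appended words satisfy $UA\approx VA$ (whenever the resulting class lies in ${\mathcal D}$). Granting this, the induction is short. For $n=0$ the assertion is exactly the hypothesis that $B_{-m}\dots B_0$ represents $\alpha_0$. For the inductive step, apply the statement at level $n-1$ to the subpath $\alpha_0\to\cdots\to\alpha_{n-1}$ and the representative $B_{-m}\dots B_0$: this gives that $W:=B_{-m}\dots B_0A_1\dots A_{n-1}$ is a representative of $\alpha_{n-1}$. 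Because $\alpha_{n-1}\to\alpha_n$ is an arrow, the definition of the Markov diagram provides some representative $R$ of $\alpha_{n-1}$ such that $RA_n$ represents $\alpha_n$, the appended letter being forced to be $A_n$, the element of ${\mathcal P}$ containing $\langle\alpha_n\rangle$. Since $W\approx R$, the congruence property yields $WA_n\approx RA_n\approx\alpha_n$, which is precisely the claim at level $n$.

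The technical engine is a forward recursion for the sets $\langle\cdot\rangle$: for any word $W=A_{-n}\dots A_0$ and any $A\in{\mathcal P}$,
$$\langle WA\rangle = f(\langle W\rangle)\cap A.$$
This follows from the elementary identity $g(D\cap g^{-1}(S))=g(D)\cap S$, valid for any map $g$, applied with $g=f^{n+1}$, $D=[W]_f$ and $S=A$, combined with $[WA]_f=[W]_f\cap f^{-(n+1)}(A)$ and $\langle W\rangle=f^n([W]_f)$. The point is that $\langle WA\rangle$ depends on $W$ only through the set $\langle W\rangle$. I shall also use the reformulation of the significant part: writing $W^{(j)}=A_{-j}\dots A_0$ for the suffixes of $W$, the images $\langle W^{(j)}\rangle$ form a nested decreasing family (longer suffix, more constraints) that stabilises exactly at the significant part, so that $\mathrm{sig}(W)$ is the shortest suffix of $W$ with $\langle\mathrm{sig}(W)\rangle=\langle W\rangle$.

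With these tools the congruence property — the heart of the matter — goes as follows. Let $S=C_{-k}\dots C_0$ be the common significant part of $U\approx V$, so that $S$ is a suffix of both $U$ and $V$ and $\langle S\rangle=\langle U\rangle=\langle V\rangle=:J$. By the recursion, $\langle SA\rangle=f(J)\cap A=\langle UA\rangle=\langle VA\rangle$, so the suffix $SA$ of both $UA$ and $VA$ already realises their common image. Consequently the significant part of $UA$ is a suffix of $SA$ (it has length at most $|SA|$), and likewise for $VA$; being the shortest suffix of the \emph{same} word $SA$ with image $f(J)\cap A$, these two significant parts coincide as words, which is exactly $UA\approx VA$.

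I expect the main obstacle to be this congruence step: one must rule out that appending a letter creates a longer ``memory'' than the common significant part already accounts for, i.e. that the significant part of $UA$ cannot reach past $SA$. The recursion $\langle WA\rangle=f(\langle W\rangle)\cap A$ is what makes this transparent, since it shows the new image is a function of the old image alone; everything else is bookkeeping with suffixes and with the arrow definition, and the geometric facts recorded in Lemma~\ref{lem:sig-part} (that the relevant cylinders are intervals on which $f^n$ is injective) serve only as reassuring background, the recursion itself being purely set-theoretic.
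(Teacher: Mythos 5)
Your proof is correct and follows essentially the same route as the paper's: the recursion $\langle WA\rangle=f(\langle W\rangle)\cap A$ is exactly the displayed computation $f^{m+1}([B_{-m}\dots B_0A_1]_f)=A_1\cap f(\langle\alpha_0\rangle)$ in the paper's proof, and both arguments induct along the path, appending one letter at a time. Your packaging --- isolating the right-congruence property and checking it via the ``shortest suffix with the same image'' characterization of the significant part --- is a clean reorganization of the same idea, with the small merit of making explicit why the conclusion holds for an arbitrary representative $B_{-m}\dots B_0$ and not only for the significant part.
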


This is a rephrasing of Lemma 5.4 of \cite{BuzziSIM}. We give a proof 
for completeness.

\begin{proof}
Suppose that $B_{-m}\dots B_0$ is the significant part of $\alpha_0$.
Since $\alpha_0\to \alpha_1$, there exist $A_{-k},\dots,A_0,A_1$ 
in ${\mathcal P}$ such that $\alpha_0$ 
is equivalent to $A_{-k}\dots A_0$ and $\alpha_1$ is equivalent to
$A_{-k}\dots A_0A_1$. Thus, $A_{-k}\dots A_0\approx B_{-m}\dots
B_0$. This implies $k\geq m$, $A_{-m}\dots A_0 = B_{-m}\dots B_0$ and:
 $$
\left<\alpha_0\right>=f^k([A_{-k}\dots A_0]_f)=f^m([B_{-m}\dots B_0]_f).
 $$
It follows immediately that $A_{-m}\dots A_0A_1 = B_{-m}\dots B_0A_1$ and:
\begin{eqnarray*}
f^{m+1}([B_{-m}\dots B_0A_1]_f)&=&
A_1\cap\bigcap_{i=0}^m f^{i+1}(B_{-i})
=A_1\cap f(\left<\alpha_0\right>)\\
&=& f^{k+1}([A_{-k}\dots A_0A_1]_f).
\end{eqnarray*}
i.e., $A_{-k}\dots A_0A_1\approx B_{-m}\dots B_0A_1$.
Moreover $\left<\alpha_1\right>\subset A_1$.

The rest of the proof follows by induction.
\end{proof}

\begin{lemma} \label{lemPasse}
Let $\alpha=(\alpha_n)_{n\geq 0}\in\Sigma_+(D)$ and $x=\pi(\alpha)$. 
If the significant part of $\alpha_n$ is $A_{-k}\dots A_0$ and if
$k\leq n$, then $f^{n-j}x\in \overline{A_{-j}}$ for $0\leq j\leq k$.
\end{lemma}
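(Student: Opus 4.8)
The plan is to match the significant part of $\alpha_n$ against the itinerary of $\alpha$, the tool being Lemma \ref{lem:follower}. Write $P_i$ for the element of $\mathcal{P}$ containing $\left<\alpha_i\right>$, so that $(P_i)_{i\geq 0}$ is the itinerary of $\alpha$; by the definition of the projection $\pi$, the point $x=\pi(\alpha)$ lies in $\bigcap_{i\geq 0}f^{-i}(\overline{P_i})$, whence $f^i x\in\overline{P_i}$ for every $i\geq 0$. Thus the whole statement reduces to identifying the letters of the significant part of $\alpha_n$ with itinerary letters, namely to proving $A_{-j}=P_{n-j}$ for $0\leq j\leq k$.

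First I would apply Lemma \ref{lem:follower} to the path $\alpha_0\dots\alpha_n$, taking $B_{-m}\dots B_0$ to be the significant part of $\alpha_0$: it yields that $\alpha_n$ is equivalent to the word $W=B_{-m}\dots B_0 P_1\dots P_n$. Since $\left<\alpha_0\right>\subset B_0$ and $\left<\alpha_0\right>\subset P_0$, and distinct elements of $\mathcal{P}$ are disjoint, we have $B_0=P_0$; hence the last $n+1$ letters of $W$ are exactly $P_0P_1\dots P_n$.

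The crux is then a combinatorial remark about the reduction rule: the significant part of a word is by construction one of its suffixes, obtained by deleting letters from the left as long as the associated set $\left<\cdot\right>$ is unchanged, and equivalent words share the same significant part. Therefore the significant part $A_{-k}\dots A_0$ of $\alpha_n$ coincides with the suffix of length $k+1$ of the representative $W$. The hypothesis $k\leq n$ is used precisely here: it guarantees that this suffix lies entirely inside the block $P_0\dots P_n$ recording the forward itinerary, and does not reach into the history letters $B_{-m},\dots,B_{-1}$ inherited from $\alpha_0$. Reading off the letters gives $A_{-j}=P_{n-j}$ for all $0\leq j\leq k$.

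Combining the two observations, for each $0\leq j\leq k$ we have $0\leq n-j\leq n$ and $f^{n-j}x\in\overline{P_{n-j}}=\overline{A_{-j}}$, which is the assertion. I do not expect a genuine analytic obstacle; the only delicate point is the bookkeeping of indices, and in particular checking that the inequality $k\leq n$ is exactly what keeps the relevant suffix within the part of $W$ controlled by the itinerary of $\alpha$.
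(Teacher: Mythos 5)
Your proof is correct and follows essentially the same route as the paper's: both rest on Lemma \ref{lem:follower} together with the observation that the significant part of $\alpha_n$ is a common suffix of every representative word, so that $A_{-j}$ must coincide with the itinerary letter $P_{n-j}$ (the paper just applies Lemma \ref{lem:follower} to the sub-path $\alpha_{n-j}\to\dots\to\alpha_n$ for each $j$ instead of once to the whole path). Your identification of where the hypothesis $k\leq n$ enters is exactly right.
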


\begin{proof}
Let $0\leq j\leq k$.
If $\alpha_{n-j}$ is the equivalence class of some $B_{-q}\dots B_0$ then
there exist $B_1,\dots,B_j\in{\mathcal P}$ such that $\alpha_n$ is the equivalence
class of $B_{-q}\dots B_0B_1\dots B_j$ (see Lemma \ref{lem:follower}).
Therefore $B_0\dots B_j=A_{-j}\dots A_0$. 
By definition of $\pi$, this implies $f^{n-j}(x)\in \overline{A_{-j}}$
and proves the Lemma.
\end{proof}

\begin{remark}
Lemma~\ref{lemPasse} would be {\em false} 
if we had used Hofbauer's Markov diagram. Indeed, in Hofbauer's Markov
diagram, the vertices of the graph are not the sequences $\alpha\in {\mathcal D}$
as above but the intervals
$\left< \alpha \right>$. But completely different words $\alpha$ (sharing
only their last symbol) may by coincidence give the same interval. These
words will give disjoint paths ending at the same vertex, in
contradiction with the Lemma.
\end{remark}

Finally, we need that the transitivity of $f$ implies that the
Markov diagram is essentially irreducible.

\begin{lemma} \label{lem:transitive}
If $f$ is  transitive then its Markov diagram contains at
most one connected component with entropy larger than  $h_{\rm top}(C,f)$.
\end{lemma}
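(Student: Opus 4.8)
The plan is to argue by contradiction: suppose the Markov diagram $\mathcal{D}$ contains two distinct connected components $G_1$ and $G_2$, each with entropy strictly greater than $h_{\rm top}(C,f)$, and derive a violation of transitivity of $f$. The key point is that transitivity of $f$ should force some kind of communication between the parts of the interval dynamics represented by $G_1$ and $G_2$, which in turn ought to produce a path in the graph $\mathcal{D}$ joining the two components — contradicting the fact that distinct connected components are disjoint and have no paths between them (in both directions).

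First I would use Theorem \ref{theo:bijection}: since each $G_i$ has entropy $>h_{\rm top}(C,f)$, by the Gurevich variational principle (Theorem \ref{theo:variational-principle}) it carries an ergodic measure $\nu_i$ with $h(\nu_i,\sigma)>h_{\rm top}(C,f)$, and the bijection pushes this forward to an $f$-ergodic measure $\mu_i=\pi_*(\nu_i)$ supported on the interval. The transitivity of $f$ means there is a dense orbit, so I would try to show that the supports of $\mu_1$ and $\mu_2$ cannot be dynamically separated: concretely, I would look for an interval $J$ charged by $\mu_1$ whose forward image under some iterate enters a region charged by $\mu_2$. Using Lemma \ref{lemPasse}, which ties the itinerary $\alpha$ to the genuine orbit $\pi(\alpha)$ via $f^{n-j}x\in\overline{A_{-j}}$, I would translate such an interval-level transition into the existence of a finite path in $\mathcal{D}$ that starts in (a vertex equivalent to a word whose significant part lies in) $G_1$ and reaches $G_2$. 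Lemma \ref{lem:follower} is the tool that lets me extend a word representing a vertex of $G_1$ by appending the symbols $A_1\dots A_n$ of the connecting orbit segment, producing the required path.

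The main obstacle I expect is the \emph{bookkeeping of significant parts}: a single orbit segment of $f$ corresponds to a path in $\mathcal{D}$ only after passing to equivalence classes, and whether the path stays in, or transits between, connected components depends delicately on how the significant part evolves along the word. Transitivity gives a connecting orbit at the level of the interval, but I must verify that the associated vertices genuinely land in $G_1$ at one end and in $G_2$ at the other, rather than escaping into low-entropy components that merely shadow the critical set $C$; this is exactly where the entropy hypothesis $h(G_i)>h_{\rm top}(C,f)$ must be invoked, to guarantee the relevant vertices are not ``critical'' and that the bijection of Theorem \ref{theo:bijection} applies. Once a path from $G_1$ to $G_2$ is produced, by symmetry (applying transitivity again to get an orbit from the $\mu_2$-region back to the $\mu_1$-region) I obtain a path from $G_2$ back to $G_1$, so $G_1$ and $G_2$ communicate in both directions and hence lie in a single strongly connected component, contradicting their distinctness. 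This contradiction establishes that at most one component has entropy exceeding $h_{\rm top}(C,f)$.
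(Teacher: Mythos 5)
Your overall strategy coincides with the paper's: fix a vertex $\alpha_0\in G_1$ with interval $I=\left<\alpha_0\right>$, use transitivity to send part of $I$ into the region seen by $G_2$, lift the connecting orbit segment to a path in ${\mathcal D}$ via Lemma \ref{lem:follower}, and conclude by symmetry. However, the step you yourself flag as ``the main obstacle'' --- verifying that the lifted path actually terminates at a vertex of $G_2$ rather than drifting into some other component --- is precisely the heart of the proof, and your proposal does not resolve it; it only announces that the entropy hypothesis ``must be invoked'' there. The mechanism you suggest (the relevant vertices are not ``critical'', so Theorem \ref{theo:bijection} applies) is not what makes this work. What the paper actually does is: take $\mu_2=\pi_*(\nu_2)$ for an ergodic $\nu_2$ on $\Sigma_+(G_2)$ with $h(\nu_2,\sigma)>h_{\rm top}(C,f)$, and observe that this entropy bound forces $\mu_2\bigl(\bigcup_{n,m\geq0}f^{-n}f^mC\bigr)=0$, since otherwise $h(\mu_2,f)\leq h_{\rm top}(f^mC,f)=h_{\rm top}(C,f)$. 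This permits the choice of a $\mu_2$-generic target point $y=\pi(\beta)$ with $\beta\in\Sigma_+(G_2)$, lying in $K=\bigcup_{n\geq0}f^n(I)$ (a finite union of intervals with finite complement, by transitivity) and satisfying $\lim_{n\to+\infty}\diam{\mathcal P}^n(y)=0$. The shrinking of the cylinders ${\mathcal P}^n(y)$ is what closes the argument: once $[A_k\dots A_{k+n}]_f\subset\left<\alpha_k\right>\cap\left<\beta_0\right>$, a direct computation gives $\left<\alpha_{n+k}\right>=f^n([A_k\dots A_{n+k}]_f)=\left<\beta_n\right>$, so the lifted vertex $\alpha_{n+k}$ is literally \emph{equal} to the vertex $\beta_n\in G_2$. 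Without this synchronization step your path starting in $G_1$ has no reason to land in $G_2$, and the lemma is not proved.

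Two secondary inaccuracies: you do not need a measure on $G_1$ at all (the interval $\left<\alpha_0\right>$ of an arbitrary vertex of $G_1$ serves as the source; only $G_2$ needs to carry a high-entropy measure, to select the generic target $y$), and Lemma \ref{lemPasse} plays no role in this argument --- the tools are Lemma \ref{lem:follower} together with the genericity properties of $\mu_2$ listed above.
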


\begin{proof}
Let $G_1,G_2\subset{\mathcal D}$ be two connected components with entropy
larger than $h_{\rm top}(C,f)$. By symmetry,
it is enough to build a path from $G_1\to G_2$ to prove that $G_1=G_2$.

Define ${\mathcal P}^n$ as the collection of disjoint open intervals 
$[A_0\dots A_{n-1}]_f$ with $A_i\in{\mathcal P}$. If $x\in [0,1]$, let ${\mathcal P}^n(x)$ 
denote the element of ${\mathcal P}^n$ that contains $x$ when such an element exists.

Let $\alpha_0\in G_1$. Let $I$ be the open, non-empty interval
$\left<\alpha_0\right>$. The set $K=\bigcup_{n\geq0} f^n(I)$ 
is a union of intervals.
By transitivity, $f^k(I)\cap I\ne\emptyset$ for some $k$
so that $K$ is a finite union of intervals. Again by 
transitivity, $K$ is dense in $[0,1]$. Hence $[0,1]\setminus K$ is
reduced to finitely many points.

Fix $\nu_2$ an ergodic and invariant probability measure on
$\Sigma_+(G_2)$ with $h(\nu_2,\sigma)>h_{\rm top}(C,f)$. Let
$\mu_2=\pi_*(\nu_2)$. Let us observe a number of generic properties:
\begin{itemize}
\item $\mu_2$ is non-atomic so that $\mu_2(K)=1$.
\item $\mu_2(\pi (\Sigma_+(G_2)))=1$.
\item $\mu_2(\bigcup_{n,m\geq0} f^{-n}f^m C) = 0$. Otherwise 
$\mu_2(f^m C)=\mu_2(f^{-n}f^m C)>0$ for some $n,m\geq0$ but this
would imply $h(\mu_2,f)\leq h_{\rm top}(f^mC,f)=h_{\rm top}(C,f)$. But
$h(\mu_2,f)=h(\nu_2,f)$ by Theorem \ref{theo:bijection}, 
which leads to a contradiction.
\item for $\mu_2$-a.e. $x$, ${\mathcal P}^n(x)$ is well-defined for all
$n\geq1$ and $\lim_{n\to +\infty} \diam {\mathcal P}^n(x)=0$.
\end{itemize}

 From the properties above we deduce that there exists 
$y\in K\cap\pi(\Sigma_+(G_2))$ such that 
$y\not\in\bigcup_{n,m\geq0} f^{-n}f^m C$ and
$\lim_{n\to +\infty}\diam{\mathcal P}^n(y)=0$. Let $\beta\in \Sigma_+(G_2)$ such that
$y=\pi(\beta)$,  $\beta_0$ being the equivalence class of some 
$B_{-q}\dots B_0$. Define $J=\left<\beta_0 \right>$; this is an open 
interval containing $y$. Since $y\in K$, there exist
$x\in I$ and $k\geq 0$ such that $y=f^k(x)$. 
Moreover, for all $n\geq 0$ there exists $A_n\in{\mathcal P}$ such that
$f^n(x)\in A_n$. Let
$\alpha_n$ be the equivalence class of $A_{-p}\dots A_n$ for all
$n\geq 0$, where $A_{-p}\dots A_0\approx\alpha_0$.
The set $J'=\left<\alpha_k\right>$ is an open interval containing $y$.

By Lemma \ref{lem:follower}, $\beta_n$ is the equivalence 
class of $B_{-q}\dots B_0A_{k+1}\dots A_{k+n}$, with $B_0=A_k$. One has 
${\mathcal P}^n(y)=[A_k\dots A_{k+n-1}]_f$, and its diameter tends to $0$ by
the choice of $y$. Therefore there exists $n\geq 0$ such that
$[A_k\dots A_{k+n}]_f\subset J'\cap J$.
One has
\begin{eqnarray*}
\left<\alpha_{n+k}\right>&=&f^{n+k+p}([A_{-p}\dots A_{n+k}]_f)\\
&=& f^n(f^{k+p}([A_{-p}\dots A_k]_f)\cap [A_k\dots A_{n+k}]_f)\\
&=& f^n(J'\cap [A_k\dots A_{n+k}]_f)\\
&=& f^n([A_k\dots A_{n+k}]_f).
\end{eqnarray*}
The same computation gives
$$ 
\left<\beta_n\right>=f^n(J\cap [A_k\dots A_{n+k}]_f)= 
f^n([A_k\dots A_{n+k}]_f).
$$
Therefore $\alpha_{n+k}= \beta_n$, and $\alpha_0\to\dots\to 
\alpha_{n+k}$ is a path between $\alpha_0\in G_1$ and $\beta_n\in G_2$.
\end{proof}

\section{Entropy at infinity in ${\mathcal D}$}

In this section, we consider a sequence of ergodic measures on
$\Sigma_+({\mathcal D})$ which charge less and less any finite set of vertices
and whose entropy is bounded from below by $h_{{\rm top}}(C(f),f)$. 
We prove (Proposition \ref{prop:high-levels}) that these measures escape to
the high levels of the diagram. Then we show in Theorem \ref{theo:eps-delta}
that, if $C_*=\{k\delta\mid k=1,\ldots, [\delta^{-1}]\}$ for
small
$\delta>0$, such a sequence of measures cannot have a large entropy.

\medbreak
To prove the Proposition we need the following result (which we restrict
to  interval maps and ergodic measures).

\begin{theorem}[Ruelle-Margulis inequality \cite{Ruelle}]\label{theo:RM}
Let $f\colon [0,1]\to [0,1]$ be a $C^1$ map and $\mu$ a $f$-ergodic
measure. The quantity
$$
\lambda(x)=\lim_{n\to+\infty}\frac{1}{n}\log|(f^n)'(x)|
=\lim_{n\to+\infty}\frac{1}{n}\sum_{k=0}^{n-1}\log|f'(f^k(x))|
$$
exists almost everywhere in $[-\infty,+\infty)$ and is almost constant; 
let $\lambda$ be this constant.

Then $h(\mu,f)\leq \max (\lambda,0)$.
\end{theorem}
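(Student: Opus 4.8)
\medskip
\noindent\emph{Proof sketch.}
The plan is to first produce the exponent $\lambda$ by an ergodic-averaging argument, and then to bound the entropy by applying a crude one-step derivative estimate to the iterate $f^n$ and letting $n\to\infty$. For the first part, note that by the chain rule $\log|(f^n)'(x)|=\sum_{k=0}^{n-1}\log|f'(f^k x)|$, so $\frac1n\log|(f^n)'(x)|$ is the Birkhoff average of $\log|f'|$. Since $f$ is $C^1$ on a compact interval, $\log|f'|\le\log\|f'\|_\infty<+\infty$ is bounded above, although it may equal $-\infty$ on $C(f)$ and need not be integrable. Birkhoff's theorem in the form valid for functions with integrable positive part then gives a.e.\ convergence to an invariant limit with values in $[-\infty,+\infty)$; ergodicity of $\mu$ forces this limit to be the a.e.\ constant $\lambda=\int\log|f'|\,d\mu$, which is exactly the asserted $\lambda(x)$.

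For the inequality I would use the metric (Kolmogorov--Sinai) entropy, which agrees with the quantity $h(\mu,f)$, together with the identity $h(\mu,f)=\frac1n h(\mu,f^n)$. Fix $n$ and let $\mathcal P$ be a partition of $[0,1]$ into intervals of mesh $\rho$. The standard bound $h(\mu,f^n,\mathcal P)\le H_\mu\big((f^n)^{-1}\mathcal P\mid\mathcal P\big)$ replaces the entropy rate by a single conditional entropy, and the latter is at most $\sum_{P\in\mathcal P}\mu(P)\log N(P)$, where $N(P)$ is the number of atoms of $\mathcal P$ meeting $f^n(P)$. Here the one-dimensional geometry does the work: $f^n(P)$ is again an interval, of length at most $\int_P|(f^n)'|\le \rho\,\sup_P|(f^n)'|$, so $N(P)\le \sup_P|(f^n)'|+2$. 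Letting $\rho\to0$ along a refining sequence of interval partitions generating the Borel $\sigma$-algebra, the left-hand sides increase to $h(\mu,f^n)$ while the right-hand sides tend to $\int\log\big(|(f^n)'|+2\big)\,d\mu$ by uniform continuity of $(f^n)'$, giving
$$ h(\mu,f)=\tfrac1n h(\mu,f^n)\le \tfrac1n\int\log\big(|(f^n)'|+2\big)\,d\mu. $$

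Finally I would let $n\to\infty$. Pointwise, $\frac1n\log\big(|(f^n)'(x)|+2\big)\le \frac1n\max\big(\log|(f^n)'(x)|,\log2\big)+\frac{\log2}{n}$, which converges a.e.\ to $\max(\lambda,0)$ by the Birkhoff convergence above (this also covers the case $\lambda=-\infty$, where the limit is $0$). Since these functions are bounded above by $\log\|f'\|_\infty+o(1)$ uniformly in $x$ and $n$, reverse Fatou lets me pass the $\limsup$ inside the integral, yielding $h(\mu,f)\le\max(\lambda,0)$.

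The main obstacle is precisely the low regularity. Because $f$ is only $C^1$, $\log|f'|$ is not bounded below and the one-step estimate $\int\log(|f'|+2)\,d\mu$ is far from sharp; the sharp constant $\max(\lambda,0)$ emerges only after passing to $f^n$, dividing by $n$, and carefully handling the resulting non-integrability and the limit interchange through the upper bound on $f'$ and reverse Fatou. Note also that the geometric count $N(P)\le\sup_P|(f^n)'|+2$ uses no distortion control — it relies only on the fact that in dimension one the image of an interval is an interval whose length is at most $\int|(f^n)'|$ — which is what keeps the argument valid under mere $C^1$ smoothness.
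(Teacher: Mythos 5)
The paper offers no proof of this statement: it is imported verbatim from Ruelle's article \cite{Ruelle} and used as a black box, so your write-up is a self-contained reconstruction rather than a variant of anything in the text. It is correct, and it is essentially Ruelle's original strategy specialised to dimension one: bound the number of atoms of a fine interval partition that $f^n(P)$ can meet by the derivative of $f^n$ on $P$, convert this into the entropy bound $h(\mu,f)\leq\frac1n\int\log\bigl(|(f^n)'|+2\bigr)\,d\mu$, and let $n\to\infty$. The delicate points are all handled: Birkhoff's theorem for functions with integrable positive part applies to $\log|f'|$ (bounded above by $\log\|f'\|_\infty$, possibly $-\infty$ on $C(f)$) and ergodicity yields the constant $\lambda\in[-\infty,+\infty)$; the counting estimate $N(P)\leq\sup_P|(f^n)'|+2$ is exactly where one-dimensionality replaces distortion control; and reverse Fatou is justified by the uniform upper bound $\max(\log\|f'\|_\infty,0)+o(1)$. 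Two small points worth making explicit if you write this out in full: (a) the inequality $h(\mu,f^n,\mathcal{P})\leq H_\mu\bigl((f^n)^{-1}\mathcal{P}\mid\mathcal{P}\bigr)$ should be routed through the standard bound $h(\mu,T,\mathcal{P})\leq H_\mu(\mathcal{P}\mid T^{-1}\mathcal{P})$, the two conditional entropies being equal by $T$-invariance of $\mu$; (b) the count ``length$/\rho+2$'' requires the atoms of $\mathcal{P}$ to have length exactly $\rho$ (an equal subdivision), not merely mesh at most $\rho$, which the refining dyadic partitions needed for the Kolmogorov--Sinai step do provide. Neither affects the validity of the argument.
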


\begin{proposition}\label{prop:high-levels}
Let $f\colon [0,1]\to [0,1]$ be a $C^1$ map of the interval. Let
$C_*$ be a finite subset of $[0,1]$ and consider the Markov diagram ${\mathcal D}$
associated to $f$ with respect to
$C=C(f)\cup C_*$. Let $(\nu_m)_{m\geq 1}$ be a sequence of invariant, 
ergodic measures on $\Sigma_+({\mathcal D})$ 
such that $h(\nu_m,\sigma)>h_{\rm top}(C(f),f)$ 
and suppose that for all finite subsets 
$F\subset {\mathcal D}$, $\lim_{m\to+\infty} \nu_m([F])=0$. 
Then for all integers $N$, one has
$\lim_{m\to+\infty} \nu_m([{\mathcal D}_N])=0$.
\end{proposition}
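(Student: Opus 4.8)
The plan is to show that the measures $\nu_m$ concentrate on high levels of the diagram by combining the Ruelle-Margulis inequality with a structural fact about the low levels of the Markov diagram ${\mathcal D}_N$. The key intuition is this: the vertices in ${\mathcal D}_N$ correspond to equivalence classes generated by short words, so there are only finitely many of them. By hypothesis, $\nu_m([F]) \to 0$ for every *finite* $F$, and ${\mathcal D}_N$ is finite. So if the statement is literally "${\mathcal D}_N$ finite implies $\nu_m([{\mathcal D}_N]) \to 0$," it would follow trivially by taking $F = {\mathcal D}_N$. Since that would make the Proposition vacuous, the real content must be that ${\mathcal D}_N$ is generally *infinite* (the level-$N$ vertices $\alpha$ with $H(\alpha) \le N$ form an infinite set because the significant parts can have unboundedly many distinct images), and the finite-subset hypothesis alone does not suffice. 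Thus the work lies in controlling the entropy carried by the whole infinite set ${\mathcal D}_N$, not just any finite piece of it.

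**First I would** examine what it means for a generic point under $\nu_m$ to spend a definite fraction of its time in $[{\mathcal D}_N]$. By the ergodic theorem, if $\nu_m([{\mathcal D}_N])$ stays bounded below by some $c > 0$ along a subsequence, then $\nu_m$-a.e. orbit visits $[{\mathcal D}_N]$ with asymptotic frequency $\ge c$. The crucial geometric point, via Lemma~\ref{lemPasse}, is that when $\alpha_n \in {\mathcal D}_N$ (significant part of length $k \le N$), the point $x = \pi(\alpha)$ satisfies $f^{n-j}x \in \overline{A_{-j}}$ for $0 \le j \le k$, pinning down the recent itinerary. Heuristically, each visit to a low level forces a ``return'' that resets the word to a short significant part, which constrains the derivative growth. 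The plan is to show that frequent low-level visits force the Lyapunov exponent $\lambda$ of $\mu_m = \pi_*(\nu_m)$ to be small—bounded essentially by $h_{\rm top}(C(f),f)$, because low levels are where the dynamics looks like iterating near the critical set rather than generating new expansion.

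**The hard part will be** making precise the link between the combinatorial height in ${\mathcal D}$ and the derivative. I expect one would estimate the derivative $|(f^n)'(x)|$ over a block of time, decomposing the orbit into segments between consecutive visits to ${\mathcal D}_N$; on each low-level segment the expansion is controlled (the homeomorphism property of Lemma~\ref{lem:sig-part}, combined with the boundedness of the relevant intervals, gives that the images $\langle \alpha_n\rangle$ cannot shrink too fast while staying low), so a positive frequency of low-level visits caps the Lyapunov exponent. Then Ruelle-Margulis (Theorem~\ref{theo:RM}) yields $h(\mu_m, f) = h(\nu_m, \sigma) \le \max(\lambda, 0)$, and since entropy transfers across $\pi$ by Theorem~\ref{theo:bijection}, we would get $h(\nu_m,\sigma) \le h_{\rm top}(C(f),f) + o(1)$, contradicting the standing hypothesis $h(\nu_m,\sigma) > h_{\rm top}(C(f),f)$.

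**To conclude**, I would argue by contradiction: assume $\limsup_m \nu_m([{\mathcal D}_N]) = c > 0$ for some fixed $N$, pass to a subsequence, and derive from the frequency-$c$ low-level recurrence a uniform upper bound on $\lambda$, hence on $h(\nu_m,\sigma)$, that pushes the entropy down to $h_{\rm top}(C(f),f)$, contradicting the assumption that the entropies stay strictly above it. The subtle point throughout is that the finite-subset condition $\nu_m([F]) \to 0$ is the genuine input—it rules out mass accumulating on any fixed finite collection of vertices—while the entropy lower bound is what prevents the measures from drowning entirely in the critical dynamics; the Proposition says these two facts together force escape past every fixed finite level set ${\mathcal D}_N$, even though each such set is infinite. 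I expect the careful derivative estimate tying graph-height to $\log|(f^n)'|$ to be where essentially all the technical effort concentrates.
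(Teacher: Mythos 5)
You have assembled the right toolbox (Ruelle--Margulis, Lemma~\ref{lemPasse}, the ergodic theorem and genericity), and you correctly diagnose that ${\mathcal D}_N$ is infinite so the finite-set hypothesis cannot be applied directly. But the central idea of the paper's proof is missing, and the mechanism you propose in its place does not work. The paper splits ${\mathcal D}_N$ into two pieces: the set ${\mathcal F}_r$ of classes of height $\le N$ whose significant part uses only elements of ${\mathcal P}$ of diameter $>r$, and the rest. The point is that ${\mathcal F}_r$ is \emph{finite} (only finitely many $A\in{\mathcal P}$ have $\diam A>r$), so $\nu_m([{\mathcal F}_r])\to 0$ follows directly from the hypothesis; whereas every class in ${\mathcal D}_N\setminus{\mathcal F}_r$ has some \emph{small} interval $A_{-p}$, $p\le N$, in its significant part. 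The $C^1$ hypothesis then enters exactly here and nowhere else: since $C(f)\subset (f')^{-1}\{0\}$, small elements of ${\mathcal P}$ lie in small neighbourhoods of zeros of $f'$, so $|f'|<\beta<1$ on them. Since $h(\nu_m,\sigma)>h_{\rm top}(C(f),f)\ge 0$ gives $h(\mu_m,f)>0$ via Theorem~\ref{theo:bijection} and hence a positive Lyapunov exponent by Ruelle--Margulis, a generic orbit can spend only a fraction $<\epsilon/(N+1)$ of its time in these small intervals; Lemma~\ref{lemPasse} converts each visit $\alpha_k\in{\mathcal D}_N\setminus{\mathcal F}_r$ into a visit of the orbit of $x=\pi(\alpha)$ to a small interval within the preceding $N$ steps, with multiplicity at most $N+1$, giving $\nu_m([{\mathcal D}_N\setminus{\mathcal F}_r])\le\epsilon$.

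Your argument runs in the opposite direction and relies on a step that fails. You want to show that a frequency-$c$ recurrence to $[{\mathcal D}_N]$ caps the Lyapunov exponent, invoking ``the images $\langle\alpha_n\rangle$ cannot shrink too fast while staying low.'' Low height by itself imposes no constraint on $|(f^n)'(x)|$: a class of height $\le N$ built entirely from large partition elements (i.e., an element of ${\mathcal F}_r$) can be visited with positive frequency by an orbit with large expansion -- this is precisely why ${\mathcal F}_r$ must be split off and disposed of by the finite-set hypothesis rather than by a derivative estimate. Moreover, your target bound $h(\nu_m,\sigma)\le h_{\rm top}(C(f),f)+o(1)$ is stronger than what this Proposition needs (that counting argument is the content of Theorem~\ref{theo:eps-delta}); here one only needs $\lambda>0$, i.e., the strict positivity of the entropy, to make visits to the small-derivative region rare. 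Without the decomposition via ${\mathcal F}_r$ and the link ``small partition element $\Rightarrow$ small $|f'|$,'' the proof does not close.
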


Let us remark that in the cases that are of interest to us, the sets
${\mathcal D}_N$ are not finite.

\medbreak
\begin{proof}
Fix an integer $N$. If $r$ is a positive number, we define the following
subset of the Markov diagram:
$$
{\mathcal F}_r=\{ A_{-n}\dots A_0 \in {\mathcal D} \mid n\leq N \mbox{ and }
     \diam\, A_{-k}>r \text{ for all } 0\leq k\leq n\}
\subset {\mathcal D}_N.
$$
The set ${\mathcal F}_r$ is finite because only finitely many elements $A\in{\mathcal P}$
satisfy $\diam\, A>r$. Therefore $\lim_{m\to+\infty}\nu_m([{\mathcal F}_r])=0$
by assumption. 
By definition, $C=C(f)\cup C_*$, where $C_*$ is a finite set,
and $C(f)\subset (f')^{-1}\{0\}$ 
thus there exists $r_0>0$ such that for all $r\leq r_0$ and $A\in{\mathcal P}$,
$$
    \diam\, A\leq r \Rightarrow \forall x\in \overline{A},
d(x,(f')^{-1}\{0\})\leq r.
$$
Let $\epsilon>0$. Fix $0<\beta<1$ such that 
$\frac{\log \|f'\|_{\infty}}{|\log \beta|}<\frac{\epsilon}{N+1}$.
By continuity of $f'$ one can choose $r>0$ such that for all $A\in{\mathcal P}$
with $\diam A\leq r$,
\begin{equation}\label{eq:small-derivative}
      \forall x\in \bar A, |f'(x)|<\beta.
\end{equation}

Choose $m_0$ such that for all $m\geq m_0$, $\nu_m([{\mathcal F}_r])<\epsilon$ and
put $\mu_m=\pi_*(\nu_m)$; $\mu_m$ is ergodic and, according to Theorem 
\ref{theo:bijection}, $h(\mu_m,f)=h(\nu_m,\sigma)>0$.
Let $\lambda(x)=\lim_{n\to+\infty}\frac{1}{n}\sum_{k=0}^{n-1} 
\log|f'(f^k(x))|$. Applying Theorem \ref{theo:RM} we get that
$0<h(\mu_m,f)\leq \lambda(x)$ for $\mu_m$-a.e. $x$.
Consequently, there exists a $\nu_m$-generic point $\alpha=
(\alpha_n)_{n\geq 0}\in \Sigma_+({\mathcal D})$ such that, for $x=\pi(\alpha)$,
$$
\lim_{n\to+\infty}\frac{1}{n}\sum_{k=0}^{n-1} \log|f'(f^k(x))|>0.
$$

Let $n$ be large enough so that 
$\sum_{k=0}^{n-1} \log|f'(f^k(x))|>0$.
Let 
$$
J=\{0\leq k<n\mid 
f^k(x)\in \overline{A} \text{ with }  A\in{\mathcal P} \text{ and }
\diam\, A\leq r \}.
$$ 
Using (\ref{eq:small-derivative}),
one has
\begin{eqnarray*}
0<\sum_{k=0}^{n-1} \log|f'(f^k(x))| & = &
\sum_{k\in J}\log|f'(f^k(x))|+\sum_{k\in [0,n)\setminus J}\log|f'(f^k(x))|\\
 &\leq & -\# J\cdot |\log\beta| +n\log \|f'\|_{\infty}.\\
\end{eqnarray*}
Thus 
$$
      \# J<\frac{n\log \|f'\|_{\infty}}{|\log\beta|}
<\frac{n\epsilon}{N+1}.
$$

Let $N\leq k<n$ be such that $\alpha_k\in {\mathcal D}_N\setminus {\mathcal F}_r$, i.e., the
significant part of $\alpha_k$ is of the form $A_{-q}\dots A_0$ with
$0\leq q\leq N$ with $\diam A_{-p}\leq r$ for some $0\leq p\leq q$. 
Since $p\leq k$, Lemma \ref{lemPasse}
applies and $f^{k-p}(x)\in \overline{A_{-p}}$, thus $k-p\in J$.
Observe that for a given $k$ there are at most $N+1$ indices $p$ as above, thus
$$
\frac{1}{N+1}\#\{N\leq k<n\mid \alpha_k\in{\mathcal D}_N\setminus{\mathcal F}_r\}\leq \# J.
$$
This implies that
\begin{eqnarray*}
\frac{1}{n}\#\{0\leq k<n\mid \alpha_k\in{\mathcal D}_N\setminus{\mathcal F}_r\}&\leq &
\frac{N}{n}+\frac{(N+1)\# J}{n}\\
 &\leq & \frac{N}{n}+\epsilon
\end{eqnarray*}
and this inequality is valid for all integers $n$ large enough.
Moreover, the point $\alpha$ is generic for $\nu_m$, therefore
$$
\nu_m([{\mathcal D}_N\setminus {\mathcal F}_r])=\lim_{n\to+\infty}\frac{1}{n}
\#\{0\leq k<n\mid \alpha_k\in{\mathcal D}_N\setminus {\mathcal F}_r\}\leq \epsilon.
$$
For $m\geq m_0$, one obtains that
$\nu_m([{\mathcal D}_N])=\nu_m([{\mathcal D}_N\setminus {\mathcal F}_r])+\nu_m([{\mathcal F}_r])\leq 2\epsilon$.
This concludes the proof.
\end{proof}

We now turn to the

\begin{theorem}\label{theo:eps-delta}
Let $f\colon [0,1]\to [0,1]$ be a $C^1$ map. 
Let $\gamma>0$. Then there exists $\delta>0$ satisfying the
following property. 
Define $C_*=\left\{k\delta\mid k=1,\dots, [\delta^{-1}]\right\}$
and consider the Markov diagram ${\mathcal D}$ associated to $f$ with respect to
$C=C(f)\cup C_*$.

Let $(\nu_m)_{m\geq 1}$ be a sequence of ergodic measures on 
$\Sigma_+({\mathcal D})$ such that, for all finite subsets $F\subset {\mathcal D}$ one has 
$$
\lim_{m\to+\infty} \nu_m([F])=0.
$$
Then 
$$
\limsup_{m\to+\infty}h(\nu_m,\sigma)\leq h_{{\rm top}}(C(f),f)+h_{{\rm loc}}(f)+\gamma.
$$
\end{theorem}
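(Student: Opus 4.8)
The plan is to bound $h(\nu_m,\sigma)$ from above by counting only the orbit segments that are genuinely relevant to an escaping measure, separating the complexity created while the orbit is tracked deep in the diagram (to be compared with $h_{\mathrm{loc}}(f)$) from the complexity created while it lingers near the critical set (to be compared with $h_{\mathrm{top}}(C(f),f)$). First I would dispose of the trivial case: if $\limsup_m h(\nu_m,\sigma)\le h_{\mathrm{top}}(C(f),f)$ there is nothing to prove, so I may pass to a subsequence along which $h(\nu_m,\sigma)$ converges to the $\limsup$ and $h(\nu_m,\sigma)>h_{\mathrm{top}}(C(f),f)$ for every $m$. On this subsequence Proposition~\ref{prop:high-levels} applies and gives, for every fixed level $N$, that $\nu_m([{\mathcal D}_N])\to0$; and by Theorem~\ref{theo:bijection} the projected measures $\mu_m=\pi_*\nu_m$ are ergodic with $h(\mu_m,f)=h(\nu_m,\sigma)$, so it suffices to bound $h(\mu_m,f)$ by an efficient cover.

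The whole purpose of enlarging the critical set by the $\delta$-net $C_*$ is that every element of ${\mathcal P}$ then has diameter $\le\delta$, while by \eqref{eq:htopC} the entropy of the critical set is unchanged. Two consequences drive the argument. On one hand, since consecutive iterates of any two points of a single cylinder $[A_0\dots A_{\ell-1}]_f$ lie in a common ${\mathcal P}$-element at each time, every such cylinder is contained in a $(\delta,\ell)$-Bowen ball; hence the number of $(\delta',\ell)$-balls meeting it is at most $\sup_x r(\delta',\ell,B(x,\ell,\delta))$, which by the very \emph{definition} of local entropy is $\le e^{\ell(h_{\mathrm{loc}}(f,\delta)+o(1))}$ once $\delta'$ is small and $\ell$ large. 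On the other hand, I choose $\delta$ so small that $h_{\mathrm{loc}}(f,\delta)\le h_{\mathrm{loc}}(f)+\gamma/3$. I would then estimate $h(\mu_m,f)$ through Katok's formula, building a cover of a set $Y$ with $\mu_m(Y)\ge\lambda$ in which each point is described by three pieces of data along its length-$n$ orbit: the (rare) times whose itinerary sits in ${\mathcal D}_N$, together with the corresponding vertices; the times at which $f^k x$ is close to $C(f)$; and, on the complementary deep stretches, the $(\delta',\ell)$-ball it occupies inside its confining $(\delta,\ell)$-Bowen ball.

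The heart is to estimate these three contributions. By Birkhoff's theorem the frequency of times in ${\mathcal D}_N$ is, for $\mu_m$-typical points, close to $\nu_m([{\mathcal D}_N])$, which tends to $0$; fixing $N$ large and $m$ large, the combinatorial cost of placing and labelling these markers is $\le\gamma/3$ per unit time. On the deep stretches between markers the orbit stays at level $>N$, so by Lemma~\ref{lemPasse} and $\diam{\mathcal P}\le\delta$ it is carried by a single monotone branch confined to one Bowen ball, and the local-entropy count above bounds the number of admissible fine itineraries there by $e^{\ell(h_{\mathrm{loc}}(f)+\gamma/3+o(1))}$. The times spent near $C(f)$ are controlled by $h_{\mathrm{top}}(C(f),f)$: here I expect to reuse the Ruelle--Margulis inequality (Theorem~\ref{theo:RM}) exactly as in the proof of Proposition~\ref{prop:high-levels}, so that positivity of the Lyapunov exponent forces the frequency of small-derivative times to be small and the corresponding branching to cost at most $e^{n(h_{\mathrm{top}}(C(f),f)+o(1))}$. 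Multiplying the three counts, letting $n\to\infty$ and then $N\to\infty$, yields $\limsup_m h(\mu_m,f)\le h_{\mathrm{top}}(C(f),f)+h_{\mathrm{loc}}(f)+\gamma$.

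The main obstacle is precisely the interface between the two scales and the gluing of excursions. One must verify that the count of deep cylinders is governed by $h_{\mathrm{loc}}(f,\delta)$ and \emph{not} by the far larger total number $e^{n\,h_{\mathrm{top}}(f)}$ of monotone pieces — this is exactly what the escaping hypothesis, through $\nu_m([{\mathcal D}_N])\to0$, is meant to buy — and that letting the inner scale $\delta'\to0$ to separate the (possibly arbitrarily small) cylinders near a critical accumulation does not smuggle extra exponential growth beyond the critical term. Concretely, the delicate point is to glue the local-entropy covers of consecutive deep stretches across the interposed low-level and near-critical transitions while keeping the bookkeeping of transition positions and symbols subexponential, so that only the genuine rates $h_{\mathrm{loc}}(f)$ and $h_{\mathrm{top}}(C(f),f)$, and no transition overhead, survive in the limit.
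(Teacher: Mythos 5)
Your setup is right as far as it goes: reducing to the case $h(\nu_m,\sigma)>h_{\rm top}(C(f),f)$ so that Proposition~\ref{prop:high-levels} applies, exploiting that $C_*$ forces $\diam A\le\delta$ for every $A\in{\mathcal P}$ so that length-$\ell$ cylinders sit inside $(\delta,\ell)$-Bowen balls, and letting local entropy pay only for the refinement of a Bowen ball to a finer scale. But there is a genuine gap at exactly the point you flag as ``the main obstacle'', and the mechanism you propose to close it would not work. The term $h_{\rm top}(C(f),f)$ does not enter through ``times at which $f^kx$ is close to $C(f)$'', nor through the Ruelle--Margulis inequality: Theorem~\ref{theo:RM} is used only inside Proposition~\ref{prop:high-levels}, to show that small-diameter (hence small-derivative) symbols are visited with small frequency, and it gives no control whatsoever on the \emph{number of distinct coarse itineraries} that a deep stretch can follow. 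What you actually need --- and what the escaping hypothesis $\nu_m([F])\to0$ alone does not buy --- is a bound on how many $(\delta,\ell)$-Bowen balls the deep stretches can occupy; a priori this count is of order $e^{\ell h_{\rm top}(f)}$, which destroys the estimate.

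The paper closes this gap with a structural fact about the Markov diagram that is absent from your proposal. If the vertex $\alpha_{b_i}$ ending a deep stretch has height $n_i$ with significant part $B_{-n_i}\dots B_0$, minimality of the significant part forces $f(B_{-n_i})\not\supset[B_{-n_i+1}\dots B_0]_f$ while the intersection is nonempty; since $f$ is monotone on $\overline{B_{-n_i}}$, this produces a point $z\in\partial B_{-n_i}\subset C$ with $f^k(z)\in\overline{B_{-n_i+k}}$ for $0\le k\le n_i$. Combined with Lemma~\ref{lemPasse} and $\diam{\mathcal P}\le\delta$, every deep stretch of the orbit of $x$ is $\delta$-shadowed by an orbit segment \emph{starting in $C$}, hence lies within $2\delta$ of one of the $r(\delta,n_i,C)\le e^{(h_{\rm top}(C(f),f)+\epsilon)n_i}$ elements of a Bowen cover of $C$. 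That is the sole source of the critical-entropy term, and it is what reduces the enumeration of coarse itineraries from $e^{\ell h_{\rm top}(f)}$ to $e^{\ell(h_{\rm top}(C(f),f)+\epsilon)}$. A secondary point: the paper sidesteps your gluing worries entirely by not applying local entropy stretch by stretch; it first bounds $h(\mu,f,4\delta)$ by the description count (interval positions, shadowing points $z_i\in C_{n_i}$, and $C_*$-labels at the exceptional times), and only then adds $h_{\rm loc}(f,4\delta)$ once, globally, via Lemma~\ref{lem:hmu-hloc}.
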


For the proof of this Theorem, we need two more facts.

The first is a standard estimate. It derives from the Stirling formula.

\begin{lemma}\label{lem:combinatorial}
Let $0<\alpha <1/2$ and $\epsilon>0$. Define
$\phi(\alpha)=-\alpha\log\alpha-(1-\alpha)\log(1-\alpha)$.
Then for all integers $n$ large enough one has
$$
n\begin{pmatrix} n \\ \alpha n \end{pmatrix}\leq 
e^{(\phi(\alpha)+\epsilon)n},
$$
and $\displaystyle \lim_{\alpha\to 0} \phi(\alpha)=0$.
\end{lemma}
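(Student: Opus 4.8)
The plan is to prove the combinatorial estimate
$$
n\binom{n}{\alpha n}\leq e^{(\phi(\alpha)+\epsilon)n}
$$
by means of the Stirling formula, exactly as the statement suggests. I write $m=\alpha n$ (tacitly assuming $\alpha n$ is an integer; in the application only the asymptotic rate matters, so one may round without affecting the bound) and apply the two-sided Stirling bound $\sqrt{2\pi k}\,(k/e)^k\leq k!\leq e\sqrt{k}\,(k/e)^k$ to each of the three factorials in $\binom{n}{m}=\frac{n!}{m!(n-m)!}$. The factors of $e$ and $\sqrt{k}$ contribute only polynomially in $n$, while the dominant terms $(k/e)^k$ combine to give the exponential rate. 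The leading factor of $n$ in front of the binomial coefficient is likewise only polynomial, so it is harmless.

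The key algebraic step is to extract the exponential rate. After substituting the Stirling estimates, the dominant contribution to $\log\binom{n}{m}$ is
$$
n\log n - m\log m - (n-m)\log(n-m).
$$
Writing $m=\alpha n$ and $n-m=(1-\alpha)n$, the logarithms expand as $\log(\alpha n)=\log\alpha+\log n$ and $\log((1-\alpha)n)=\log(1-\alpha)+\log n$. The $\log n$ contributions cancel: the coefficient of $\log n$ is $n-m-(n-m)=0$. What survives is precisely
$$
-\alpha n\log\alpha-(1-\alpha)n\log(1-\alpha)=n\,\phi(\alpha),
$$
which is the desired main term. All remaining contributions — the factors $e$, the powers $\sqrt{k}$, and the leading factor $n$ — together amount to at most $C\log n$ for some constant $C$. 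Hence
$$
\log\left(n\binom{n}{\alpha n}\right)\leq n\,\phi(\alpha)+C\log n,
$$
and since $C\log n\leq \epsilon n$ for all $n$ large enough (with the threshold depending on $\epsilon$ but not on $\alpha$, as $C$ can be chosen absolute), the claimed inequality follows. This handles the first assertion.

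For the limit $\lim_{\alpha\to 0}\phi(\alpha)=0$, I would argue directly from the definition $\phi(\alpha)=-\alpha\log\alpha-(1-\alpha)\log(1-\alpha)$. As $\alpha\to 0^+$, the term $-\alpha\log\alpha\to 0$ since $\alpha\log\alpha\to 0$, and $-(1-\alpha)\log(1-\alpha)\to -1\cdot\log 1=0$ by continuity. Thus both summands vanish and $\phi(\alpha)\to 0$.

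I do not expect any genuine obstacle here: this is a routine consequence of Stirling's formula, and the only point requiring a moment's care is verifying that the $\log n$ terms cancel so that the polynomial prefactors are negligible against $e^{\epsilon n}$. The one subtlety worth flagging is uniformity — one wants the ``$n$ large enough'' threshold to depend only on $\epsilon$ and not on $\alpha$, which holds because the error term $C\log n$ has a constant $C$ independent of $\alpha\in(0,1/2)$.
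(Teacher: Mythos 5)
Your proof is correct and is precisely the argument the paper intends: the paper gives no proof of this lemma beyond the remark that it ``derives from the Stirling formula,'' and your computation --- two-sided Stirling bounds on the three factorials, cancellation of the $\log n$ terms leaving the main term $n\,\phi(\alpha)$, and absorption of the polynomial prefactors (including the leading factor $n$) into $e^{\epsilon n}$ --- is exactly the standard way to fill in that remark. Your attention to the two minor points (rounding when $\alpha n$ is not an integer, and the fact that the threshold can be taken uniform in $\alpha$) is sound, though for the paper's application uniformity is not actually needed, since $\alpha$ and $\epsilon$ are fixed before $n$ is sent to infinity.
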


The second fact follows from the definition of the local entropy and
Katok's entropy formula.

\begin{lemma}\label{lem:hmu-hloc}
Let $f\colon X\to X$ be a continuous self-map of a compact metric space
and $\mu$  an ergodic invariant Borel measure for $f$. Then for all $\epsilon>0$, 
$$
h(\mu,f)\leq h(\mu,f,\epsilon)+h_{{\rm loc}}(f,\epsilon).
$$
\end{lemma}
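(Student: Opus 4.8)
The plan is to compare $(\delta,n)$-coverings with $(\epsilon,n)$-coverings at a finer scale $\delta<\epsilon$, and to let $\delta\to0$ at the very end, invoking Katok's formula to recover $h(\mu,f)$. I fix once and for all a number $\lambda\in(0,1)$, so that every quantity $h(\mu,f,\cdot)$ below is computed with this same $\lambda$. The key observation is a submultiplicative covering bound: for any $Y\subset X$, any $n$, and any $0<\delta<\epsilon$,
$$
r(\delta,n,Y)\leq r(\epsilon,n,Y)\cdot\sup_{x\in X} r(\delta,n,B(x,n,\epsilon)).
$$
To see this, first cover $Y$ by $N=r(\epsilon,n,Y)$ balls $B(x_1,n,\epsilon),\dots,B(x_N,n,\epsilon)$; then cover each $B(x_i,n,\epsilon)$ by at most $\sup_{x}r(\delta,n,B(x,n,\epsilon))$ balls of the form $B(\cdot,n,\delta)$. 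The union of all these $(\delta,n)$-balls covers $Y$, which gives the bound.

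Next I would take the infimum over all $Y$ with $\mu(Y)\geq\lambda$. Since the factor $\sup_{x}r(\delta,n,B(x,n,\epsilon))$ is independent of $Y$, it survives the infimum:
$$
\inf_{\mu(Y)\geq\lambda} r(\delta,n,Y)\leq\Big(\inf_{\mu(Y)\geq\lambda} r(\epsilon,n,Y)\Big)\cdot\sup_{x\in X} r(\delta,n,B(x,n,\epsilon)).
$$
Taking $\tfrac1n\log$, then $\limsup_{n}$, and using the subadditivity of $\limsup$ on the right-hand side, I obtain
$$
h(\mu,f,\delta)\leq h(\mu,f,\epsilon)+\limsup_{n\to+\infty}\frac1n\log\sup_{x\in X} r(\delta,n,B(x,n,\epsilon)).
$$

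Finally I would let $\delta\to0$. By Katok's formula the left-hand side converges to $h(\mu,f)$: the quantity $h(\mu,f,\delta)$ is monotone as $\delta$ decreases, so the limit exists and equals the metric entropy because $\mu$ is ergodic. The first term on the right is independent of $\delta$, and by the very definition of the local entropy the second term converges to $h_{\rm loc}(f,\epsilon)=\lim_{\delta\to0}\limsup_{n}\frac1n\log\sup_{x}r(\delta,n,B(x,n,\epsilon))$. Passing to the limit in the displayed inequality yields $h(\mu,f)\leq h(\mu,f,\epsilon)+h_{\rm loc}(f,\epsilon)$, as claimed.

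This is essentially a two-scale covering estimate, so I do not expect a serious obstacle. The only points that demand care are, first, verifying that the infimum over $Y$ passes cleanly through the product precisely because the local-entropy factor does not depend on $Y$; and second, matching the order of limits, namely $\limsup_{n}$ followed by $\lim_{\delta\to0}$, exactly to the definitions of $h(\mu,f,\cdot)$ and $h_{\rm loc}(f,\epsilon)$, so that nothing beyond the subadditivity of $\limsup$ is ever interchanged.
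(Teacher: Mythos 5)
Your proof is correct. The paper gives no proof of this lemma at all (it merely remarks that it "follows from the definition of the local entropy and Katok's entropy formula"), and your two-scale covering estimate $r(\delta,n,Y)\leq r(\epsilon,n,Y)\cdot\sup_{x}r(\delta,n,B(x,n,\epsilon))$, followed by the infimum over $Y$, the $\limsup$ in $n$, and the limit $\delta\to 0$, is exactly the intended argument.
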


\begin{proof}[of the Theorem]
Let $\epsilon=\gamma/(2+\log(4\|f'\|_\infty+5))$.
One can choose $\delta>0$ such that:
\begin{gather*}
h_{{\rm top}}(C(f),f,\delta)<h_{{\rm top}}(C(f),f)+\epsilon\\
h_{{\rm loc}}(f,4\delta)<h_{{\rm loc}}(f)+\epsilon.
\end{gather*}
Let $C_*=\{k\delta\mid 1\leq k\leq [\delta^{-1}]\}$ and $C=C(f)\cup C_*$.
One has $r(\delta,n,C)\leq r(\delta,n,C(f))+\# C_*$ thus
$$
h_{top}(C,f,\delta)\leq h_{top}(C(f),f,\delta)<h_{top}(C(f),f)+\epsilon.
$$

There exists an integer $N_0$ such that, for all $n\geq N_0$,
$r(\delta,n,C)\leq e^{(h_{{\rm top}}(C(f),f)+\epsilon)n}$. Let $C_n$ be
a $(\delta,n)$-cover of $C$ of cardinality $r(\delta,n,C)$.

According to Lemma \ref{lem:combinatorial}, there exist two integers
$M, N$ such that $N\geq N_0$ and
\begin{equation}\label{eq:N}
\forall n\geq M,\ 
\frac{n}{N}\begin{pmatrix} n \\ 2n/N \end{pmatrix}<e^{\epsilon n}.
\end{equation}

Let $(\nu_m)_{m\geq 1}$ be a sequence of ergodic measures satisfying
the assumption of the Theorem. Observe that we can assume that
$h(\nu_m,\sigma)>h_{top}(C(f),f)$ for all integers $m\geq 1$.
By Proposition \ref{prop:high-levels}, 
$\lim_{m\to+\infty}\nu_m([{\mathcal D}_N])=0$. Fix
$m\geq M$ such that $\nu_m([{\mathcal D}_N])<\epsilon$ and define $\nu=\nu_m$ and
$\mu=\pi_*(\nu)$. Theorem~\ref{theo:bijection} says that
$h(\mu,f)=h(\nu,\sigma)$.

By the ergodic Theorem, for $\nu$-almost every $(\alpha_n)_{n\geq 0}
\in \Sigma_+({\mathcal D})$, one has
$$
\lim_{n\to+\infty}\frac{1}{n}\#\{0\leq k<n\mid \alpha_k\in{\mathcal D}_N\}=
\nu([{\mathcal D}_N])<\epsilon.
$$
Consequently, there exist a set $S_0\subset \Sigma_+({\mathcal D})$ and an integer
$T\geq M$ such that $\nu(S_0)>0$ and for all $(\alpha_n)_{n\geq 0}\in
S_0$ and $n\geq T$,
\begin{equation}\label{freqDN} 
\frac{1}{n}\#\{0\leq k<n\mid \alpha_k\in{\mathcal D}_N\}<\epsilon.
\end{equation}
Let $D\in{\mathcal D}$ such that $\nu(S_0\cap [D])>0$ and define $S=S_0\cap [D]$.
One has $\mu(\pi(S))\geq \nu(S)>0$.
We are going to bound $r(4\delta,n,\pi(S))$, which will give a 
bound on $h(\nu,\sigma)$.

Let $\alpha=(\alpha_n)_{n\geq 0}\in S$, $x=\pi(\alpha)$ and $n\geq T$.
We define a finite set $I=\{1,\dots,j\}$ and 
disjoint integer subintervals $[a_i,b_i),i\in I$ satisfying
the following properties.
\begin{enumerate}
\item $[a_i,b_i)\subset [-H(D),n)$ for all $i\in I$ (recall that
$H(D)$ is the height of $D$ in the graph ${\mathcal D}$, see section~\ref{secDiagram}).
\item $n_i=b_i-a_i>N$ for all $i\in I$.
\item $\#\left([0,n)\setminus \bigcup_{i\in I} [a_i,b_i)\right)<\epsilon n$.
\item There exists $z_i\in C_{n_i}$ such that 
$f^{a_i}(x)\in B(z_i,n_i,2\delta)$ for all $i\in I$.
\end{enumerate}

To define $I=\{1,\dots,j\}$ and the subintervals $[a_i,b_i)$, we set
$a_0=n$ and proceed inductively. Assume that $a_{i-1}$ is already defined.
Let $k$ be the largest integer such that $0<k\leq a_{i-1}$ and $\alpha_k
\not\in {\mathcal D}_N$. If there is no such $k$ then we stop here setting $j=i-1$.
Otherwise, we let $b_i=k$ and $a_i=b_i-H(\alpha_{b_i})$. Since 
$H(\alpha_{b_i})>N$ by choice of $k$, the induction ultimately ends.

We prove that these intervals have the stated properties.
The significant part of $\alpha_0=D$ is some $A_{-H(D)}\dots A_0$. 
By Lemma \ref{lem:follower} 
there exist $A_1,A_2,\ldots\in{\mathcal P}$, such that $\alpha_k$ is the equivalence 
class of $A_{-H(D)}\dots A_0A_1\dots A_k$ for each $k\geq 0$. This implies that 
$H(\alpha_k)\leq H(D)+k$. Therefore
$a_i=b_i-H(\alpha_{b_i})\geq -H(D)$; this is property (i).

By definition, $\alpha_{b_i}\not\in{\mathcal D}_N$, that is, $H(\alpha_{b_i})>N$.
Since $n_i=b_i-a_i=H(\alpha_{b_i})$, property (ii) holds.

Let $J=\{0\leq k<n\mid \alpha_k\in{\mathcal D}_N\}$. Equation (\ref{freqDN}) says
that $\# J<n\epsilon$. If $k$ satisfies $0<k\leq a_j$ or
$b_i<k\leq a_{i-1}$ for some $i\in I$, then $\alpha_k\in{\mathcal D}_N$ by
definition of $(b_i)_{i\in I}$. Therefore
$$
(0,a_j]\cup\bigcup_{i\in I} (b_i,a_{i-1}]\subset J.
$$
One has
$$
[0,n)\setminus\bigcup_{i\in I}[a_i,b_i)=
[0,a_j)\cup\bigcup_{i\in I}[b_i,a_{i-1}).
$$
Moreover, $\#[a,b)=\#(a,b]$, hence
$$
\#\left([0,n)\setminus\bigcup_{i\in I}[a_i,b_i)\right)=
\#\left((0,a_j]\cup\bigcup_{i\in I}(b_i,a_{i-1}]\right)\leq \# J<n\epsilon.
$$
This is property (iii).

Finally, we show that property (iv) holds.
Let $i\in I$ and let $A_{-p}\dots A_0$ be the significant part of
$\alpha_{a_i}$. Using Lemma \ref{lem:follower} there exist
$A_1,\dots,A_{n_i}\in{\mathcal P}$ such that $\alpha_{b_i}$ is the equivalence class
of $A_{-p}\dots A_0A_1\dots A_{n_i}$. But the significant part of 
$\alpha_{b_i}$ is some $B_{-n_i}\dots B_0$ (recall that 
$n_i=H(\alpha_{b_i})$). Therefore,
by definition of the equivalence, $A_0\dots A_{n_i}=B_{-n_i}\dots B_0$.
By definition of the significant part, we have
$$
f^{n_i-1}([B_{-n_i+1}\dots B_0]_f)\supsetneq f^{n_i}([B_{-n_i}\dots B_0]_f).
$$
By definition,
\begin{eqnarray*}
f^{n_i}([B_{-n_i}\dots B_0]_f)&=&\bigcap_{k=0}^{n_i} f^k(B_{-k})\\
 &=& f^{n_i}(B_{-n_i})\cap f^{n_i-1}([B_{-n_i+1}\dots B_0]_f),
\end{eqnarray*}
thus $f^{n_i}(B_{-n_i})\not\supset f^{n_i-1}([B_{-n_i+1}\dots B_0]_f)$,
which implies that
\begin{equation}\label{eq:boundary1}
f(B_{-n_i})\not\supset [B_{-n_i+1}\dots B_0]_f.
\end{equation}
In addition, $[B_{-n_i}\dots B_0]_f\not=\emptyset$ so that 
\begin{equation}\label{eq:boundary2}
f(B_{-n_i})\cap [B_{-n_i+1}\dots B_0]_f\not=\emptyset.
\end{equation}
$f$ is monotonic on  $\overline{B_{-n_i}}$ and by Lemma
\ref{lem:sig-part} the set $[B_{-n_i+1}\dots B_0]_f$ is an interval;
combining this with (\ref{eq:boundary1}) and
(\ref{eq:boundary2}), it follows that there exists $z\in\partial B_{-n_i}$
such that $f(z)\in[B_{-n_i+1}\dots B_0]_f$. In other words, 
$f^k(z)\in\overline{B_{-n_i+k}}$ for $k=0,\dots,n_i$. 

$H(\alpha_{b_i})=n_i$ and $b_i=a_i+n_i$, hence 
$f^{b_i-(n_i-k)}(x)=f^{a_i+k}(x)\in \overline{B_{-n_i+k}}$ 
for $k=0,\dots,n_i$ according to Lemma \ref{lemPasse}. Moreover 
the diameter of ${\mathcal P}$ is at most $\delta$ by the definition of $C$. 
Therefore $f^{a_i}(x)\in B(z,n_i,\delta)$.
Since $z\in \partial B_{-n_i}\subset C$, there exists $z_i\in C_{n_i}$
such that $z\in B(z_i,n_i,\delta)$. Thus $f^{a_i}(x)\in B(z_i,n_i,2\delta)$
and property (iv) is satisfied.

\medbreak
A {\em description} of $x$ up to time $n$ is a sequence of points 
$(x_k)_{0\leq k<n}$ such that
\begin{itemize}
\item $x_{a_i+k}=f^k(z_i)$ if $i\in I$ and $0\leq k<n_i$,
\item $x_k\in C_*$ and $|f^k(x)-x_k|\leq2\delta$ if $k\not\in
\bigcup_{i\in I}[a_i,b_i)$.
\end{itemize}
Notice that these conditions imply that $|f^k(x)-x_k|\leq 2\delta$ for
$0\leq k<n$.
Let us bound the number of distinct possible descriptions.

Firstly, $\#I\leq \frac{n+H(D)}{N}$ and, when $j=\#I$ is already fixed, there
are at most $\begin{pmatrix} n+H(D) \\ 2j\end{pmatrix}$ choices for the
positions of the integers $a_i,b_i\ (i\in I)$ in $[-H(D),n)$. Hence the total number
of choices of the intervals $[a_i,b_i)$ is bounded by
$$
\frac{n+H(D)}{N}\begin{pmatrix} n+H(D) \\ 2(n+H(D))/N
\end{pmatrix}< e^{\epsilon n},
$$
the inequality being implied by (\ref{eq:N}).

Secondly, for each $i\in I$, there are at most $\#C_{n_i}\leq
e^{(h_{{\rm top}}(C(f),f)+\epsilon)n_i}$ choices of $z_i\in C_{n_i}$ because
$n_i>N\geq N_0$. Thus the number of choices of $(z_i)_{i\in I}$ is
bounded by
$$
\prod_{i\in I} e^{(h_{{\rm top}}(C(f),f)+\epsilon)n_i}\leq e^{(h_{{\rm top}}(C(f),f)+\epsilon)n}.
$$

Thirdly, consider $k\in [0,n)\setminus \bigcup_{i\in I}[a_i,b_i)$. If
$k=0$ then the number of choices of $x_0$ is at most $\#C_*\leq \delta^{-1}$.
If $k>0$ then
\begin{eqnarray*}
|x_k-f(x_{k-1})|& \leq & |x_k-f^k(x)|+|f^k(x)-f(x_{k-1})|\\
 & \leq & 2\delta +|f^{k-1}(x)-x_{k-1}| \|f'\|_{\infty}\\
 & \leq & \delta \left(2\|f'\|_{\infty}+2\right).
\end{eqnarray*}
Thus the number of possible $x_k\in C_*$ is at most 
$4\|f'\|_{\infty}+5$ if the points $x_0,\dots, x_{k-1}$ are already chosen.
Moreover, 
$\#\left([0,n)\setminus \bigcup_{i\in I}[a_i,b_i)\right)<\epsilon n$
because the intervals $[a_i,b_i)$, $i\in I$ satisfy property (iii).
Therefore, the number of choices of $x_k$, 
$k\in [0,n)\setminus \bigcup_{i\in I}[a_i,b_i)$, is bounded by
$$
\delta^{-1}\left( 4\|f'\|_{\infty}+5\right)^{\epsilon n}.
$$

Finally, the number of distinct descriptions is at most
$$
N_d=\delta^{-1} e^{(h_{{\rm top}}(C(f),f)+\epsilon+\epsilon\log( 4\|f'\|_{\infty}+5))n}.
$$

If $x,y$ admit the same description then $|f^k(x)-f^k(y)|\leq 4\delta$
for all $0\leq k<n$. Therefore there exists a $(4\delta,n)$-cover of
$\pi(S)$ of cardinality at most $N_d$, that is,
$r(4\delta,n,\pi(S))\leq N_d$. But $h(\mu,f,4\delta)\leq
\limsup_{n\to +\infty} \frac{1}{n}\log r(4\delta,n,\pi(S))$ because
$\mu(\pi(S))>0$, hence
$$
h(\mu,f,4\delta)\leq h_{{\rm top}}(C(f),f)+\epsilon+\epsilon\log ( 4\|f'\|_{\infty}+5).
$$
According to Lemma \ref{lem:hmu-hloc} and the choice of $\delta$, one has
\begin{eqnarray*}
h(\mu,f) & \leq & h(\mu,f,4\delta)+h_{{\rm loc}}(f,4\delta)\\ 
&\leq &
h_{{\rm top}}(C(f),f)+h_{{\rm loc}}(f)+\epsilon\left(2+\log( 4\|f'\|_{\infty}+5)\right)\\
&\leq& h_{{\rm top}}(C(f),f)+h_{{\rm loc}}(f)+\gamma.
\end{eqnarray*}
Since $h(\nu,\sigma)=h(\mu,f)$, this concludes the proof.
\end{proof}

\section{Existence of maximal measures}

In this section, we prove the main Theorem by combining the results
of the previous sections.

\begin{theorem}\label{theo:main-theorem}
Let $f\colon [0,1]\to [0,1]$ be a $C^1$ map and $C(f)$ the critical
set of $f$. Assume that
$h_{{\rm top}}(f)>h_{{\rm top}}(C(f),f)+h_{{\rm loc}}(f)$. Then $f$ admits a maximal measure.
Moreover, the number of ergodic maximal measures is finite and, if $f$
is transitive, the maximal measure is unique.
\end{theorem}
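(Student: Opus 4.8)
The plan is to prove the main theorem by combining the Markov-shift machinery from Section~\ref{sec:Markov} with the key entropy-at-infinity estimate of Theorem~\ref{theo:eps-delta}, arguing by contradiction. First I would fix $\gamma>0$ small enough that $h_{\rm top}(f)>h_{\rm top}(C(f),f)+h_{\rm loc}(f)+\gamma$, which is possible precisely because the hypothesis is a strict inequality. Applying Theorem~\ref{theo:eps-delta} with this $\gamma$ yields a $\delta>0$ and an associated finite set $C_*=\{k\delta\mid 1\le k\le[\delta^{-1}]\}$; I then work with the Markov diagram ${\mathcal D}$ built from $C=C(f)\cup C_*$. By \eqref{eq:htopC} this enlargement of the critical set does not change $h_{\rm top}(C,f)=h_{\rm top}(C(f),f)$, so the hypothesis of Theorem~\ref{theo:bijection} (namely $h_{\rm top}(f)>h_{\rm top}(C,f)$) holds and gives $h({\mathcal D})=h_{\rm top}(f)$ together with the entropy-preserving bijection between ergodic maximal measures of $f$ and of $\Sigma_+({\mathcal D})$.

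For existence, suppose for contradiction that $f$ has no maximal measure. Via the bijection of Theorem~\ref{theo:bijection} this means $\Sigma_+({\mathcal D})$ has no maximal measure either (any maximal measure of the shift would have entropy $h({\mathcal D})=h_{\rm top}(f)>h_{\rm top}(C,f)$, hence be non-atomic and push forward to a maximal measure of $f$). Then Proposition~\ref{prop:sequence-of-measures} supplies a sequence of ergodic Markov measures $(\nu_n)$ with $h(\nu_n,\sigma)\to h({\mathcal D})=h_{\rm top}(f)$ and $\nu_n([F])\to0$ for every finite $F\subset{\mathcal D}$. This sequence is exactly the input required by Theorem~\ref{theo:eps-delta}, which therefore forces
$$
h_{\rm top}(f)=\lim_{n\to\infty}h(\nu_n,\sigma)\le h_{\rm top}(C(f),f)+h_{\rm loc}(f)+\gamma,
$$
contradicting the choice of $\gamma$. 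Hence a maximal measure exists.

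For finiteness, I would argue similarly: if there were infinitely many distinct ergodic maximal measures for $f$, the bijection transports them to infinitely many distinct ergodic maximal measures $(\nu_k)$ for $\Sigma_+({\mathcal D})$ (each of entropy $h({\mathcal D})=h_{\rm top}(f)$, hence $>h_{\rm top}(C,f)$, hence non-atomic). Proposition~\ref{prop:sequence-of-measures2} then gives $\nu_k([F])\to0$ for all finite $F$, and feeding this sequence into Theorem~\ref{theo:eps-delta} again yields $h_{\rm top}(f)\le h_{\rm top}(C(f),f)+h_{\rm loc}(f)+\gamma$, the same contradiction. Finally, for uniqueness under transitivity, Lemma~\ref{lem:transitive} shows that the Markov diagram has at most one connected component of entropy exceeding $h_{\rm top}(C,f)$; since any ergodic maximal measure is supported on a component of entropy $h({\mathcal D})=h_{\rm top}(f)>h_{\rm top}(C,f)$ (Theorem~\ref{theo:existence-Markov}), all maximal measures of the shift live on this single component, which by Theorem~\ref{theo:existence-Markov}(2) carries at most one, and the bijection transports uniqueness back to $f$.

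The main obstacle is conceptual rather than computational: the real work has been isolated into Theorem~\ref{theo:eps-delta}, so the delicate point here is the bookkeeping that lets its conclusion be applied in each of the three cases. In particular one must check carefully that every invoked measure sequence genuinely satisfies the escaping hypothesis $\nu_n([F])\to0$ for all finite $F$ and has entropy staying above $h_{\rm top}(C(f),f)$, and that the bijection of Theorem~\ref{theo:bijection} is legitimately applicable in both directions (which relies on maximal measures being non-atomic, a consequence of their entropy strictly exceeding $h_{\rm top}(C,f)$). Once these hypotheses are verified, each case reduces to the single strict-inequality contradiction, and the proof closes.
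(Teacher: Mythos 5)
Your proposal is correct and follows essentially the same route as the paper's own proof: contradiction via Proposition~\ref{prop:sequence-of-measures} (resp.\ Proposition~\ref{prop:sequence-of-measures2}) combined with Theorem~\ref{theo:eps-delta} for existence and finiteness, and Lemma~\ref{lem:transitive} with Theorem~\ref{theo:existence-Markov} for uniqueness. The extra care you take about non-atomicity and the applicability of Theorem~\ref{theo:bijection} is sound but not a departure from the paper's argument.
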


\begin{proof}
Let $\epsilon>0$ such that $h_{{\rm top}}(f)>h_{{\rm top}}(C(f),f)+h_{{\rm loc}}(f)+\epsilon$.
Let $\delta>0$ be given by Theorem \ref{theo:eps-delta} with ${\mathcal D}$
the corresponding Markov diagram. By Theorem \ref{theo:bijection},
one has $h({\mathcal D})=h_{{\rm top}}(f)$. Suppose that $\Sigma_+({\mathcal D})$ has
no maximal measure. By Proposition \ref{prop:sequence-of-measures},
there exists a sequence of ergodic measures $(\nu_n)_{n\geq 1}$ such
that $h(\nu_n,\sigma)\to h({\mathcal D})=h_{{\rm top}}(f)$ and for all finite
subsets $F\subset {\mathcal D}$, $\nu_n([F])\to 0$.
By Theorem \ref{theo:eps-delta}, one has
$$
\limsup_{n\to+\infty}h(\nu_n,\sigma)
\leq h_{{\rm top}}(C(f),f)+h_{{\rm loc}}(f)+\epsilon<h_{{\rm top}}(f),
$$
which is a contradiction. Consequently $\Sigma_+({\mathcal D})$ has a maximal measure,
and so has $f$ by Theorem \ref{theo:bijection}, proving the first
claim of the Theorem.

Suppose now that there is a sequence $(\mu_n)_{n\geq 1}$ of distinct 
ergodic maximal
measures for $f$. Let $\nu_n$ be the ergodic measure on $\Sigma_+({\mathcal D})$ 
that corresponds to $\mu_n$ by $\pi$ (Theorem \ref{theo:bijection}).
The $\nu_n$ are distinct ergodic maximal measures, thus
for all finite subsets $F\subset {\mathcal D}$, one has $\nu_n([F])\to 0$
by Proposition \ref{prop:sequence-of-measures2}.
As previously, this leads to a contradiction by Theorem
\ref{theo:eps-delta}, proving the finiteness claim.

Finally, suppose that $f$ is transitive. Then by Lemma \ref{lem:transitive},
${\mathcal D}$ has a unique connected component of large entropy and therefore
admits at most one maximal measure by 
Theorem \ref{theo:existence-Markov}. 
Thus, $f$ has at most one maximal measure 
by Theorem \ref{theo:bijection}.
\end{proof}


\noindent
{\scshape J\'er\^ome Buzzi} -- Centre de Math{\'e}matiques de l'Ecole Polytechnique, U.M.R. 7640 du C.N.R.S., F-91128 Palaiseau cedex, France -- {\it E-mail address:} {\tt buzzi@math.polytechnique.fr}

\medskip\noindent
{\scshape Sylvie Ruette} -- Laboratoire de Math\'ematiques, Topologie et Dynamique, B\^at. 425, Universit\'e Paris-Sud, F-91405 Orsay cedex, France -- {\it E-mail address:} {\tt sylvie.ruette@math.u-psud.fr}

\end{document}